\DeclareMathOperator{\tr}{tr}
\DeclareMathOperator{\GL}{GL}
\DeclareMathOperator{\sgn}{sgn}
\DeclareMathOperator{\Aut}{Aut}
\DeclareMathOperator{\Inn}{Inn}
\DeclareMathOperator{\Out}{Out}
\DeclareMathOperator{\PPsi}{\Psi}
\newtheorem{thm}{Theorem}[section]
\newtheorem{cor}[thm]{Corollary}
\newtheorem{lem}[thm]{Lemma}
\newtheorem{prop}[thm]{Proposition}
\newtheorem{conj}[thm]{Conjecture}
\theoremstyle{definition}
\newtheorem{definition}[thm]{Definition}
\newtheorem{rem}[thm]{Remark}
\newtheorem{ex}[thm]{Example}
\algrenewcommand\algorithmicrequire{\textbf{Input:}}
\algrenewcommand\algorithmicensure{\textbf{Output:}}
\newcommand{\blank}{{\mspace{1mu}\cdot\mspace{1mu}}}
\newcommand{\N}[1]{\lVert #1 \rVert}
\newcommand{\Sr}{\Sigma^{*}_{\mathrm{red}}}
\newcommand{\Scr}{\Sigma^{*}_{\mathrm{c\,red}}}
\title{Translation length formula for two-generated groups acting on trees}
\author{Kamil Orzechowski\\
University of Rzesz\'ow\\
Rejtana 16C Street\\
35-959 Rzesz\'ow, Poland\\ 
\texttt{kamilo@dokt.ur.edu.pl}}
\date{February 24, 2026}
\begin{document}

\maketitle

\begin{abstract}
    We investigate translation length functions for two-generated groups acting by isometries on $\Lambda$-trees, where $\Lambda$ is a totally ordered abelian group.
    In this context, we provide an explicit formula for the translation length of any element of the group, under certain assumptions on the translation lengths of its generators and their products. Our approach is purely combinatorial and uses only the defining axioms of pseudo-lengths. As shown by Parry, pseudo-lengths coincide with the translation length functions for actions on $\Lambda$-trees. Furthermore, we prove that, under certain conditions on four elements $\alpha, \beta, \gamma, \delta \in \Lambda$, there exists a unique pseudo-length on the free group $F(a,b)$ assigning these values to $a$, $b$, $ab$, $ab^{-1}$, respectively.
    
    Applications include results on properly discontinuous actions and discrete free groups of isometries. We also develop an algorithmic approach to studying translation length functions arising from free actions on $\mathbb{R}$-trees. Based on this, we state a conjecture that would lead to a description of $\mathrm{Aut}{(F_2)}$-orbits in the Culler--Vogtmann outer space.

    \medskip
    \noindent
    {\bf  Keywords:} group actions on trees, $\Lambda$-tree, isometries of trees, hyperbolic isometry, translation length, pseudo-length, free group, free action, properly discontinuous action, discrete subgroup, algorithm.

    \medskip
    \noindent
    {\bf Mathematics Subject Classification:} 20E08, 20F65, 06F20.
\end{abstract}

\section{Introduction}

The concept of translation length appears in the theory of groups acting on trees by isometries. Most generally, this notion is applied to $\Lambda$-trees, for any totally ordered nontrivial abelian group $\Lambda$. Morgan and Shalen \cite[Ch. II.1]{Morgan-Shalen} were probably the first to define $\Lambda$-trees in this generality. The theory was further developed by Alperin and Bass \cite{Alperin-Bass}. We refer to \cite{Chiswell} for a thorough introduction to $\Lambda$-trees.

Culler and Morgan listed some algebraic properties of the translation length function $G\ni g\mapsto \N{g}$ for an action of a group $G$ on an $\mathbb{R}$-tree by isometries \cite[1.11]{Culler-Morgan}. They called any function from $G$ to $[0,\infty)$ satisfying these properties a {\em pseudo-length}, and asked if every pseudo-length is the translation length function of some action of $G$ on an $\mathbb{R}$-tree. This question was answered affirmatively by Parry \cite{Parry}, who worked in the general context of $\Lambda$-trees.

It is known that if two isometries $a,b$ of a $\Lambda$-tree $X$ satisfy the conditions
\begin{equation}\label{eq:introduction_conditions}
    \N{a}>0, \quad \N{b}>0, \quad \lvert \N{a}-\N{b}\rvert < \min \{\N{ab},\N{ab^{-1}}\},
\end{equation}
the group they generate acts freely, properly discontinuously, and without inversions on $X$; see \cite[Propositions 1 and 2]{Chiswell-article} or \cite[Lemmas 3.3.6 and 3.3.8]{Chiswell}. The cited proofs are geometric in nature and rely on drawing pictures or ``ping-pong'' type arguments.

We present a combinatorial approach, using only the defining conditions of a pseudo-length and not referring to any geometric interpretation. Moreover, we obtain an explicit formula for the translation length $\N{g}$ for $g\in \left<a,b\right>$ if the conditions \eqref{eq:introduction_conditions} are satisfied.

\bigskip

\noindent\textbf{Main results}

\begin{enumerate}[label={\bf\arabic*.}]
    \item {\it If $\N{\blank}$ is a pseudo-length on a group $G$ and $a,b\in G$ satisfy \eqref{eq:introduction_conditions},
    then 
    \begin{equation}\label{eq:introduction_formula}
        2\N{w}= \left(\sum_{i=1}^{n-1}\N{x_i x_{i+1}}\right) + \N{x_n x_1}>0,
    \end{equation}
     for any cyclically reduced word ${w=x_1\dots x_n}$, $x_i\in \{a,b,a^{-1},b^{-1}\}$, $n\ge 1$ (Theorem \ref{th:main}).
   }

   \item {\it If $\alpha,\beta,\gamma,\delta\in \Lambda$ satisfy the conditions:
   \begin{gather}
    \alpha>0, \;\beta>0,\; \lvert\alpha-\beta\rvert < \min\{\gamma,\delta\};\label{eq:intro_1}\\
    \text{either } \gamma=\delta > \alpha +\beta  \quad \text{or } \max\{\gamma, \delta\}=\alpha+\beta;\label{eq:intro_2}\\
    \gamma - \alpha - \beta \in 2\Lambda,\quad \delta - \alpha - \beta \in 2\Lambda;\label{eq:intro_3}
\end{gather}
   then there exists a unique pseudo-length on the free group $F(a,b)$ such that $\N{a}=\alpha$, $\N{b}=\beta$, $\N{ab}=\gamma$, and $\N{ab^{-1}}=\delta$ (Theorem \ref{thm:unique}).}
\end{enumerate}

The uniqueness part of our second result follows from \eqref{eq:introduction_formula}; note that \eqref{eq:intro_1} is the counterpart of \eqref{eq:introduction_conditions}. The existence part (Proposition \ref{prop:existence}) is nontrivial. We define a function on $F(a,b)$ by a formula that mimics \eqref{eq:introduction_formula}. The conditions \eqref{eq:intro_2} and \eqref{eq:intro_3}, which may seem technical, are used to prove that the constructed function satisfies the axioms of a pseudo-length. Notice that \eqref{eq:intro_3} is only necessary if $2\Lambda \ne \Lambda$.

\bigskip

In the final section, we provide some applications of our results. We draw a conclusion about discrete and free subgroups acting on trees (as in \cite{Conder} and \cite{Conder2022}). Explicitly, we prove the following corollary, which generalizes Conder's geometric result \cite[Corollary 3.6]{Conder} (formulated for a continuous action on a $\mathbb{Z}$-tree):

{\it If $G$ is a topological group acting on a $\Lambda$-tree $(X,d)$ by isometries in such a way that for some $x_0\in X$ the map ${G\ni g\mapsto gx_0 \in X}$ is continuous and $a,b\in G$ satisfy \eqref{eq:introduction_conditions}, where $\N{\blank}\colon G\to \Lambda_{+}$ is the translation length function of this action, then the subgroup $\left<a, b\right>$ is free of rank two and discrete (Corollary \ref{cor:Conder}).}

We also construct an algorithm that outputs a basis $(g,h)$ of $F(a,b)$ satisfying \eqref{eq:introduction_conditions} (with $g,h$ in the place of $a,b$), when given any pseudo-length $\N{\blank}\colon F(a,b)\to \Lambda_{+}$  that is {\em purely hyperbolic} (i.e., $\N{g}>0$ for all $g\ne 1$) and takes values in a nontrivial subgroup of $\mathbb{R}$ (Algorithm 1).

Based on Algorithm 1, we study $\Aut{(F_2)}$-orbits in the space $\PPsi{(F_2)}$ of all purely hyperbolic real-valued pseudo-lengths on $F(a,b)$ (Theorem \ref{thm:description_of_orbits} and Remark \ref{rem:final_rem}). As a result, we find a set whose intersection with any such orbit is nonempty. This set is defined as the union of two subsets \eqref{eq:Y1_and_Y2}, which are parametrized by four real numbers. We conjecture that it contains exactly one member of every $\Aut{(F_2)}$-orbit in $\PPsi{(F_2)}$ (Conjecture \ref{conj:conjecture}).

If our conjecture is true, we would obtain a description of $\Aut{(F_2)}$-orbits in the space of all pseudo-lengths on $F(a,b)$ that are the translation length functions of free actions on $\mathbb{R}$-trees. The space of all such functions is related to the concept of the Culler--Vogtmann {\em outer space} \cite{Culler-Vogtmann}, which (for rank two) can be identified with the projectivization of $\PPsi{(F_2)}$.

\section{Preliminaries}

In this paper, unless otherwise specified, let $\Lambda$ be a fixed totally (linearly) ordered nontrivial abelian group. We will write $\Lambda$ additively. Let also $\Lambda_{+}:=\{\lambda\in \Lambda\colon \lambda \ge 0\}$, and $\lvert \lambda\rvert:=\max\{\lambda,-\lambda\}$ for $\lambda\in \Lambda$. Without mentioning it, we will use the fact that multiplying both sides of any inequality or equality between two elements of $\Lambda$ by $2$ yields an equivalent one. When we write $\lambda'=\frac12 \lambda$ for $\lambda\in \Lambda$, we mean $2\lambda' = \lambda$, implicitly assuming that such a (necessarily unique) $\lambda'\in\Lambda$ exists. For the basic theory of ordered abelian groups, see \cite[Ch. 1, \S1]{Chiswell}.

We call $\Lambda$ {\em Archimedean} if, given $a,b\in \Lambda$ with $b\ne 0$, there exists $n\in \mathbb{Z}$ such that $a< nb$. It is known that $\Lambda$ is Archimedean if and only if there exists an embedding of ordered abelian groups $\Lambda\to \mathbb{R}$ \cite[Theorem 1.1.2]{Chiswell}. An example of a non-Archimedean totally ordered abelian group is $\mathbb{Z}\oplus \mathbb{Z}$ with the lexicographic ordering.

A {\em $\Lambda$-metric space} $(X,d)$ is defined using the same axioms as for a metric space, except that $d$ takes values in $\Lambda$ instead of $\mathbb{R}$ \cite[Ch. 1, \S2]{Chiswell}.
As usual, $d$ induces a topology on $X$ with the family of open balls as its basis. An example of a $\Lambda$-metric space is $\Lambda$ itself with the $\Lambda$-metric $d(x,y):=\lvert x-y\rvert$ for $x,y\in \Lambda$. It is the only $\Lambda$-metric that we will consider in $\Lambda$; the topology induced by $d$ turns $\Lambda$ into a topological group.

An {\em isometry} of $\Lambda$-metric spaces is defined as for metric spaces; we require that an isometry be surjective, otherwise we use the term {\em isometric embedding}.  A {\em segment} in a $\Lambda$-metric space $(X,d$) is the image of an isometric embedding $i\colon [a,b]_{\Lambda}\to X$ of a closed interval $[a,b]_{\Lambda}:=\{x\in \Lambda\colon a\le x\le b\}$, $a,b\in \Lambda$, $a\le b$, into $X$; $i(a)$ and $i(b)$ are then called the {\em endpoints} of the segment. A $\Lambda$-metric space $(X,d)$ is called {\em geodesic} if for any $x,y\in X$ there exists a segment in $X$ with $x$ and $y$ as its endpoints.
\begin{definition}[{\cite[Ch. 2, \S1]{Chiswell}}]\label{def:L-tree}
    A {\em $\Lambda$-tree} is a geodesic $\Lambda$-metric space $(X,d)$ such that:
    \begin{enumerate}
        \item if two segments of $(X,d)$ intersect in a single point, which is an endpoint of both, then their union is a segment;
        \item the intersection of two segments with a common endpoint is also a segment.
    \end{enumerate}
\end{definition}

It follows from the above definition that, in a $\Lambda$-tree $(X,d)$, there is a unique segment with $x,y$ as its endpoints for every $x,y\in X$ \cite[Lemma 2.1.1]{Chiswell}, let us denote it by $[x,y]$. If $A \subseteq X$ is such that $[x,y]\subseteq A$ whenever $x,y\in A$, we call $A$ a {\em subtree} of $X$.
A trivial example of a $\Lambda$-tree is $\Lambda$ itself, then segments in $\Lambda$ are exactly the closed intervals.

\bigskip

The most important examples of $\Lambda$-trees are $\mathbb{Z}$-trees, which are just ``ordinary'', graph-theoretic trees with the shortest path metric on the set of vertices, and $\mathbb{R}$-trees, which can be characterized as uniquely arcwise-connected geodesic metric spaces \cite[Proposition 2.2.3]{Chiswell}.

Any $\mathbb{Z}$-tree $X$ can be isometrically embedded in a canonical way into an $\mathbb{R}$-tree $\mathrm{real}(X)$, called the {\em geometric realization} of $X$ \cite[Theorem II.1.9]{Morgan-Shalen}. It is obtained by taking an isometric copy of $[0,1]_{\mathbb{R}}$ for each of the edges of $X$, doing the appropriate identification of endpoints, and extending the metric suitably.
By a {\em simplicial tree} we mean any $\mathbb{R}$-tree that is homeomorphic to $\mathrm{real}(X)$ for some $\mathbb{Z}$-tree $X$. This terminology is consistent with that of \cite{Culler-Vogtmann}; Chiswell \cite{Chiswell} uses the term {\em polyhedral tree} instead.

\bigskip

All isometries of a $\Lambda$-tree $(X,d)$ onto itself can be divided into three types: elliptic, hyperbolic and inversions \cite[Ch. 3, \S1]{Chiswell}. Let $g$ be an isometry of a $\Lambda$-tree $(X,d)$. It is called {\em elliptic} if it has a fixed point in $X$; $g$ is called an {\em inversion} if $g$ has no fixed points in $X$ but $g^2$ does; otherwise $g$ is called {\em hyperbolic}. The {\em translation length} of $g$ \cite[p. 297]{Parry} is defined as
\begin{equation}\label{eq:t_l}
    \N{g} := \begin{cases}
        0 & \text{if } g \text{ is an inversion},\\
        \min \{d(x, gx) \colon x\in X\}& \text{otherwise.}
    \end{cases}
\end{equation}
In fact, if $g$ is not an inversion, the set of points for which the minimum in \eqref{eq:t_l} is reached is a  nonempty closed subtree of $X$. In the case when $\Lambda = 2\Lambda$ (e.g., $\Lambda=\mathbb{R})$, there are no inversions.
Hyperbolic isometries are precisely those with $\N{g}>0$. If $g$ is hyperbolic, then the set $\{x\in X \colon d(x,gx)=\N{g}\}$ is called the {\em axis} of $g$; it is isometric to a subtree of $\Lambda$ and the action of $g$ on its axis corresponds to the translation by $\N{g}$, which justifies the terminology \cite[cf. Theorem 3.1.4 and Corollary 3.1.5]{Chiswell}.

\begin{ex}
Let $F$ be a field with a {\em valuation} $v$, i.e., a group homomorphism $v\colon F^{*} \to \Lambda$ satisfying $v(a+b)\ge \min\{v(a),v(b)\}$ for $a,b\in F$ (with the convention $v(0)=\infty)$. There exists a $\Lambda$-tree $X_{v}$ on which $\GL{(2,F)}$ acts by isometries \cite[Appendix A]{Alperin-Bass} with the translation length
\[\N{g}=\max\{v(\det{g})-2v(\tr{g}),0\} \quad \text{for } g\in \GL{(2,F)}.\]
The $\Lambda$-tree $X_{v}$ generalizes the Bruhat--Tits tree, which was constructed for a discrete valuation $v$, see \cite[Ch. II]{Serre}.
\end{ex}

Let us discuss pseudo-lengths and their connection with translation length functions.

\begin{definition}[{\cite[(0.2)]{Parry}}]
    Let $G$ be a group. A function $\N{\blank}\colon G \to \Lambda_{+}$ is called a {\em pseudo-length} if it satisfies the following conditions, called axioms:
    \begin{enumerate}[label=(A\arabic*), start=0]
        \item $\max\{0, \N{gh}-\N{g}-\N{h}\}\in 2\Lambda$ for any $g,h\in G$ with $\N{g}>0$, $\N{h}>0$;
        \item $\N{ghg^{-1}}=\N{h}$ for any $g,h\in G$;
        \item $\N{gh}=\N{gh^{-1}}$ or $\max\{\N{gh},\N{gh^{-1}}\}\le \N{g} + \N{h}$, for any $g,h\in G$;
        \item $\N{gh}=\N{gh^{-1}}>\N{g} + \N{h}$ or $\max\{\N{gh},\N{gh^{-1}}\}= \N{g} + \N{h}$,\\ for any $g,h\in G$ with $\N{g}>0$, $\N{h}>0$.
    \end{enumerate}
\end{definition}

According to \cite[Main Theorem, p. 298]{Parry}, $\N{\blank}\colon G\to \Lambda_{+}$ is a pseudo-length on a group $G$ if and only if there exists a $\Lambda$-tree $X$ and an action of $G$ on $X$ by isometries such that $\N{\blank}$ is the translation length function for this action.

\begin{rem}
    The original definition of a (real-valued) pseudo-length given by Culler and Morgan \cite[1.11]{Culler-Morgan} included the following additional axioms:
    \begin{enumerate}[label=(A\arabic*), start=4]
    \item $\N{1}=0$;
    \item $\N{g^{-1}}=\N{g}$ for any $g\in G$.
    \end{enumerate}

    Parry pointed out that they are redundant. Indeed, if $\N{1}>0$, the application of (A3) to $g=h=1$ would lead to a contradiction; (A5) follows from applying (A2) twice: once for the pair $(1,g)$ and once for $(1,g^{-1})$.
\end{rem}

\begin{definition}
    Let $\N{\blank}\colon G\to \Lambda_{+}$ be a pseudo-length on a group $G$. We call $\N{\blank}$ {\em purely hyperbolic} if $\N{g}>0$ for all $g\in G\setminus\{1\}$.
\end{definition}

We say that an action of a group $G$ on a set $X$ is {\em free} if every point of $X$ has trivial stabilizer. If $X$ is endowed with a topology, the action is called {\em properly discontinuous} if every $x\in X$ has a neighborhood $U$ such that $gU\cap U \neq \emptyset$ implies $g=1$.

It follows from \eqref{eq:t_l} that an action of $G$ on a $\Lambda$-tree $X$ is free and without inversions if and only if the associated translation length function is purely hyperbolic.

\bigskip

Let us introduce the concept of a {\em ping-pong} pair in a group $G$ equipped with a pseudo-length. We named it so because the defining condition often appears in the literature in so called ping-pong type arguments.

\begin{definition}
    Let $\N{\blank}\colon G\to \Lambda_{+}$ be a pseudo-length on a group $G$. We call $(g,h)\in G\times G$ a {\em ping-pong pair} if $\N{g}>0$, $\N{h}>0$, and {$\lvert \N{g}-\N{h}\rvert < \min \{\N{gh},\N{gh^{-1}}\}$}.
\end{definition}

Geometrically, the fact that $(g,h)$ is a ping-pong pair corresponds to the situation when $g, h$ are hyperbolic isometries of a $\Lambda$-tree such that either the axes of $g,h$ are disjoint or their intersection is a segment of length less then $\min\{\N{g},\N{h}\}$ (see, for example, \cite[Lemma 3.1]{Conder2022} and the paragraph that follows it).

\section{Results}

Let us collect some useful properties of every pseudo-length.

\begin{lem}
    Let $\N{\blank}$ be a pseudo-length on a group $G$. For any $g,h\in G$ we have:
    \begin{gather}
        \N{g^n}=\lvert n\rvert \N{g} \quad \text{for } n\in\mathbb{Z};\label{eq:g^n}\\
        \textrm{if} \quad \N{gh^{-1}}>\N{g}+\N{h}, \quad \textrm{then } \N{gh}=\N{gh^{-1}};\label{eq:lem_first}\\
        \textrm{if} \quad \N{gh^{-1}}<\N{g}+\N{h}, \,\N{g}>0, \,\N{h}>0, \quad \textrm{then } \N{gh}=\N{g}+\N{h}.\label{eq:lem_second}
    \end{gather}
\end{lem}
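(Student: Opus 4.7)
The plan is to first record the two consequences of the axioms pointed out in the Remark, namely $\N{1}=0$ and $\N{g^{-1}}=\N{g}$, since they are needed for the induction in \eqref{eq:g^n}. Both claims \eqref{eq:lem_first} and \eqref{eq:lem_second} will then follow in a single step by invoking the appropriate dichotomy axiom and eliminating the disjunct that is incompatible with the hypothesis. For \eqref{eq:lem_first}, axiom (A2) applied to the pair $(g,h)$ splits into $\N{gh}=\N{gh^{-1}}$ or $\max\{\N{gh},\N{gh^{-1}}\}\le \N{g}+\N{h}$; the hypothesis $\N{gh^{-1}}>\N{g}+\N{h}$ rules out the second option. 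For \eqref{eq:lem_second}, axiom (A3) gives either $\N{gh}=\N{gh^{-1}}>\N{g}+\N{h}$ or $\max\{\N{gh},\N{gh^{-1}}\}=\N{g}+\N{h}$; now the hypothesis $\N{gh^{-1}}<\N{g}+\N{h}$ eliminates the first, and, again by the same hypothesis, the maximum in the second must be realized by $\N{gh}$.

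For \eqref{eq:g^n} I would use induction on $|n|$. Since $\N{g^{-1}}=\N{g}$ reduces the statement to $n\ge 0$, and the cases $n=0,1$ are immediate, the interesting step is the inductive one for $n\ge 2$. I would apply an axiom to the pair $(g^{n-1},g)$, choosing the axiom according to whether $\N{g}$ is positive. If $\N{g}>0$, the induction hypothesis gives $\N{g^{n-1}}=(n-1)\N{g}>0$, so axiom (A3) applies; the disjunct $\N{g^n}=\N{g^{n-2}}>\N{g^{n-1}}+\N{g}$ would force $(n-2)\N{g}>n\N{g}$, contradicting $\N{g}>0$, so the other disjunct holds, namely $\max\{\N{g^n},\N{g^{n-2}}\}=n\N{g}$, and since $\N{g^{n-2}}=(n-2)\N{g}<n\N{g}$, one concludes $\N{g^n}=n\N{g}$. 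If $\N{g}=0$, axiom (A2) applied to $(g^{n-1},g)$ yields either $\N{g^n}=\N{g^{n-2}}$ or $\N{g^n}\le \N{g^{n-1}}+\N{g}=0$; in either case the induction hypothesis gives $\N{g^n}=0=n\N{g}$.

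The main (and rather mild) obstacle will be the inductive argument for \eqref{eq:g^n}: one must pick the correct axiom depending on the sign of $\N{g}$ and, in each case, discard the ``inflating'' disjunct using the induction hypothesis. Claims \eqref{eq:lem_first} and \eqref{eq:lem_second} are essentially pure logic extracted from the dichotomies built into (A2) and (A3).
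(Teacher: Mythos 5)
Your proof is correct. For the two implications \eqref{eq:lem_first} and \eqref{eq:lem_second} your argument is exactly the paper's: each follows by discarding the disjunct of (A2), respectively (A3), that is incompatible with the hypothesis (and your observation that the positivity assumptions in \eqref{eq:lem_second} are precisely what licenses the use of (A3) is the right point to make). The only divergence is in \eqref{eq:g^n}: the paper simply cites Culler--Morgan's Lemma 6.1, whereas you supply a self-contained strong induction on $\lvert n\rvert$, applying (A3) to the pair $(g^{n-1},g)$ when $\N{g}>0$ and (A2) when $\N{g}=0$, after reducing to $n\ge 0$ via $\N{g^{-1}}=\N{g}$. That induction is sound (both $n-1$ and $n-2$ lie in the range covered by the hypothesis once $n\ge 2$, and the ``inflating'' disjunct is correctly eliminated), and it has the mild advantage of working verbatim for an arbitrary totally ordered abelian group $\Lambda$ rather than deferring to a reference stated for $\mathbb{R}$-valued pseudo-lengths.
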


\begin{proof}
    The formula \eqref{eq:g^n} was proved in \cite[Lemma 6.1]{Culler-Morgan}. The implications \eqref{eq:lem_first} and \eqref{eq:lem_second} follow from (A2) and (A3) respectively.
\end{proof}

We now establish some terminology and notation that we will adhere to throughout the paper.
Let $a,b,a^{-1},b^{-1}$ be distinct symbols and $\Sigma:=\{a,b,a^{-1},b^{-1}\}$. Extend the mapping $a\mapsto a^{-1}$, $b\mapsto b^{-1}$ to an involution $^{-1}\colon \Sigma \to \Sigma$.
Let $\Sigma^{*}$ be the free monoid of all words over the alphabet $\Sigma$ with the operation of concatenation (written simply $uv$ for $u,v\in \Sigma^{*}$) and the identity $1$ (the empty word). 
We put $w^{-1}:=x_{n}^{-1}\dots x_{1}^{-1}$ for any $w=x_1\dots x_n$, where $x_i\in \Sigma$ $(1\le i\le n)$; also $1^{-1}:=1$.

A word $w\in\Sigma^{*}$ is {\em reduced} if $w=1$ or $w=x_1\dots x_n$, where $x_i\in \Sigma$ $(1\le i\le n)$ and $x_{i}\ne x_{i+1}^{-1}$ $(1\le i <n)$. We denote by $\Sr$ the set of all reduced words over $\Sigma$; it is in a canonical bijection with $F(a,b)$, the free group over $\{a,b\}$.

A word $w\in\Sr$ is {\em cyclically reduced} if $w=1$ or $w=x_1\dots x_n$, where $x_i\in \Sigma$ $(1\le i\le n)$ and $x_{n}\ne x_{1}^{-1}$. We denote by $\Scr$ the set of all cyclically reduced words over $\Sigma$.
By a {\em cyclic shift} we mean a transformation of the form
$x_1 \dots x_n \mapsto x_k x_{k+1} \dots x_n x_1 \dots x_{k-1}$ for some $1\le k\le n$, where $x_i\in \Sigma$ $(1\le i\le n)$.
Recall that every $g\in F(a,b)$ is conjugated to an element of $F(a,b)$ represented by a cyclically reduced word, which is unique up to a cyclic shift.

Let $w\in\Scr\setminus\{1\}$. We say that $u\in \Sigma^{*}\setminus\{1\}$ is a {\em repeating subword} of $w$ if there exists a word $w'$ arising from $w$ via a cyclic shift and satisfying
\[w' = v_1 u v_2 u \quad \text{for some} \quad v_1, v_2 \in \Sigma^{*}.\]
Note that $u,v_1,v_2$ are necessarily reduced.

Assume that $G$ is a group, $a,b\in G$, and $w\in \Sigma^*$. When it is clear from the context (in particular, if $w$ appears inside the pseudo-length symbol), we will simply write $w$ for the image of $w$ under the evaluation homomorphism from $\Sigma^*$ to $G$. If we want to emphasize that $w$ should be treated as a formal word, we will explicitly write $w\in \Sigma^*$.

\bigskip

The following technical lemma will be used a couple of times in the paper.

\begin{lem}\label{lem:technical}
   Assume that a function $f\colon \Sigma \times \Sigma \to \Lambda$ satisfies the following conditions:
    \begin{enumerate}
        \item $f(x,y)=f(y,x)=f(y^{-1},x^{-1})$, $f(x,x^{-1})=0$ for all $x,y\in \Sigma$;
        \item either \quad ${2f(a,b)=2f(a,b^{-1})>f(a,a)+f(b,b)}$\\
        or \quad ${2\max\{f(a,b),f(a,b^{-1})\}=f(a,a)+f(b,b)}$;
        \item $f(a,a)>0$, $f(b,b)>0$, $\lvert f(a,a)-f(b,b) \rvert < 2\min \{f(a,b),f(a,b^{-1})\}$.
    \end{enumerate}
    Then 
    \begin{equation}\label{eq:technical}
        f(y^{-1},z)\le f(x,y) + f(x,z) \quad \text{for all } x,y,z\in \Sigma,
    \end{equation}
    and the inequality \eqref{eq:technical} is strict if and only if $y\ne x^{-1}$ and $z\ne x^{-1}$.
\end{lem}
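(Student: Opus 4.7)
The plan is to reduce to the case $x = a$ by exploiting the symmetries of the hypotheses, and then to enumerate a small number of remaining cases.

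First, I would observe that axiom (1) forces $f$ to be essentially determined by the four values $\alpha := f(a,a)$, $\beta := f(b,b)$, $\gamma := f(a,b)$, $\delta := f(a, b^{-1})$: by the symmetries in (1), every other value of $f$ either equals one of $\alpha,\beta,\gamma,\delta$ or is $0$. From hypothesis (3) I would immediately deduce $\gamma, \delta > 0$ (since $|\alpha - \beta| \ge 0$), so that $f(x, y) > 0$ whenever $y \ne x^{-1}$. To reduce to $x = a$, I would note that the three involutions of $\Sigma$ induced by the automorphisms of $F(a,b)$ swapping $a \leftrightarrow b$, $a \leftrightarrow a^{-1}$, and $b \leftrightarrow b^{-1}$ all commute with inversion on $\Sigma$, generate a group acting transitively on $\Sigma$, and preserve the hypotheses (1)--(3) when $f$ is pulled back; the claim at $(x, y, z)$ for $f$ is then equivalent to the claim at $(\phi^{-1}(x), \phi^{-1}(y), \phi^{-1}(z))$ for the pulled-back function, with the equality conditions matching up correctly.

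Next, I would carry out the $x = a$ analysis. The cases $y = a^{-1}$ and $z = a^{-1}$ give equality directly from $f(a, a^{-1}) = 0$ and the symmetry in $y, z$. Otherwise $y, z \in \{a, b, b^{-1}\}$. When $y = z$ the left-hand side is $f(y^{-1}, y) = 0$ while the right-hand side is $2 f(a, y) > 0$ by the positivity established above. When $y \ne z$, up to swapping $y \leftrightarrow z$ only three subcases remain: $(y, z) \in \{(a, b), (a, b^{-1}), (b, b^{-1})\}$, which translate the desired strict inequality into $\delta < \alpha + \gamma$, $\gamma < \alpha + \delta$, and $\beta < \gamma + \delta$, respectively.

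Each of these three numerical inequalities I would dispatch by invoking the dichotomy in hypothesis (2). In the ``equal'' alternative $2\gamma = 2\delta > \alpha + \beta$, all three follow at once from $\alpha, \beta > 0$. In the ``max'' alternative $2\max\{\gamma, \delta\} = \alpha + \beta$, I would split further according to which of $\gamma, \delta$ attains the maximum: in one sub-case the ``max'' equation already provides enough slack, while in the other the strictness coming from hypothesis (3) --- in the form $\beta - \alpha < 2\gamma$ or $\alpha - \beta < 2\delta$ --- is exactly what is needed. The hard part will be precisely this final verification: it is the only point in the argument where all three hypotheses (1)--(3) must be combined, and one must track the strict inequalities through both sub-cases of the ``max'' alternative for each of the three target inequalities. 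Everything else is routine bookkeeping.
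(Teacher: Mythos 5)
Your proposal is correct and follows essentially the same route as the paper's proof: both exploit the invariance of hypotheses (i)--(iii) under inverting the generators to cut down the case analysis, reduce the claim to the three numerical inequalities $\delta<\alpha+\gamma$, $\gamma<\alpha+\delta$, $\beta<\gamma+\delta$ (plus the trivial equality and $y=z$ cases), and settle each by splitting on the dichotomy in (ii) and invoking the strict inequality in (iii) in the ``max'' alternative. The only difference is cosmetic: you normalize to $x=a$ via the symmetry group up front, whereas the paper keeps $x$ general and organizes the cases as $y=z^{-1}$ versus $y=x$ or $z=x$.
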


\begin{proof}
    First, notice that it follows from (i) that each of the conditions (ii) and (iii) remains true if we replace one or both of $a,b$ by its inverse.

    If $y$ or $z$ equals $x^{-1}$, then the equality in \eqref{eq:technical} follows from (i). Assume that $y\ne x^{-1}\ne z$. If $y=z$, we get by (i) and (iii) that $0<2f(x,y)$, as desired. Suppose that $y\ne z$. It follows from the assumptions on $x,y,z$ that either $y=z^{-1}$ or one of $y,z$ equals $x$.

    If $y=z^{-1}$, we have to prove that 
    \begin{equation}\label{eq:technical_goal_1}
        f(z,z) < f(x,z^{-1}) + f(x,z).
    \end{equation}
    Note that $x,z$ are powers of distinct letters in $\{a,b\}$. If the first alternative in (ii) holds, then $2f(x,z^{-1}) + 2f(x,z)>2f(x,x)+2f(z,z)>2f(z,z)$, hence \eqref{eq:technical_goal_1}. If the second alternative in (ii) holds, then $2f(x,z^{-1})+2f(x,z)=f(x,x)+f(z,z)+2\min\{f(x,z),f(x,z^{-1})\}$ and \eqref{eq:technical_goal_1} is equivalent to $2f(z,z)<f(x,x)+f(z,z)+2\min\{f(x,z),f(x,z^{-1})\}$, which follows from (iii).

    Assume now that $y=x$ (the case $z=x$ is similar). Note that $x,z$ are again powers of distinct letters in $\{a,b\}$. Our goal is to prove that
    \begin{equation}\label{eq:technical_goal_2}
        f(x^{-1},z)-f(x,z)<f(x,x),
    \end{equation}
    which is obviously true if $f(x^{-1},z)\le f(x,z)$. If $f(x^{-1},z)>f(x,z)$, then by (i) and (ii) we have $2f(x^{-1},z)=2\max\{f(x,z),f(x,z^{-1})\}=f(x,x)+f(z,z)$ and \eqref{eq:technical_goal_2} is equivalent to $f(x,x)+f(z,z)-2\min\{f(x,z),f(x,z^{-1})\}<2f(x,x)$, which follows from (iii).
\end{proof}

Let us prove a lemma that will be used several times in the proof of Theorem \ref{th:main}.

\begin{lem}\label{lem:yz}
    Let $\N{\blank}$ be a pseudo-length on the group $F(a,b)$. If $(a,b)$ is a ping-pong pair, then 
    \begin{equation}\label{eq:yz}
        \N{y^{-1}z} < \N{xy} + \N{xz}
    \end{equation}
    for all $x,y,z\in \Sigma$ satisfying $y\ne x^{-1}$ and $z\ne x^{-1}$.
\end{lem}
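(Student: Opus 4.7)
The plan is to derive this lemma as a direct corollary of the technical Lemma~\ref{lem:technical} by specializing it to the function $f\colon\Sigma\times\Sigma\to\Lambda$ defined by $f(x,y):=\N{xy}$. The target inequality $\N{y^{-1}z}<\N{xy}+\N{xz}$ then reads $f(y^{-1},z)<f(x,y)+f(x,z)$, which is exactly the strict form of \eqref{eq:technical}, and the hypotheses $y\ne x^{-1}$ and $z\ne x^{-1}$ match those of the technical lemma. So the only work is to verify that this $f$ satisfies the three hypotheses (i)--(iii).

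For (i), the symmetry $f(x,y)=f(y,x)$ amounts to $\N{xy}=\N{yx}$, which follows from axiom (A1) since $yx=x^{-1}(xy)x$. The symmetry $f(x,y)=f(y^{-1},x^{-1})$ amounts to $\N{xy}=\N{(xy)^{-1}}$, which is (A5) (derivable, as noted in the paper, by applying (A2) twice). Finally $f(x,x^{-1})=\N{1}=0$, again by (A5) applied to the pair $(1,1)$, as mentioned in the remark just after the pseudo-length definition.

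For (ii), I use the identity $f(x,x)=\N{x^2}=2\N{x}$ (from \eqref{eq:g^n}), which reduces the two alternatives to
\[
\N{ab}=\N{ab^{-1}}>\N{a}+\N{b}\qquad\text{or}\qquad \max\{\N{ab},\N{ab^{-1}}\}=\N{a}+\N{b}.
\]
Since $(a,b)$ is a ping-pong pair we have $\N{a},\N{b}>0$, so this dichotomy is precisely axiom (A3) applied to $g=a$, $h=b$. For (iii), using $f(a,a)=2\N{a}$ and $f(b,b)=2\N{b}$, the condition becomes $2\N{a}>0$, $2\N{b}>0$, and $2\lvert\N{a}-\N{b}\rvert<2\min\{\N{ab},\N{ab^{-1}}\}$, which is just the ping-pong pair condition doubled.

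With (i)--(iii) checked, Lemma~\ref{lem:technical} gives the desired strict inequality \eqref{eq:yz}. There is no real obstacle; the whole point is that Lemma~\ref{lem:technical} was set up to isolate the combinatorial content of the ping-pong configuration, so that this specialization is essentially bookkeeping. The only mild care needed is the systematic use of (A5) and conjugation-invariance (A1) to rewrite $\N{yx}$ and $\N{y^{-1}x^{-1}}$ as $\N{xy}$ before invoking the hypotheses of the technical lemma.
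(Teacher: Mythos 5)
Your proof is correct and follows essentially the same route as the paper: define $f(x,y):=\N{xy}$, verify hypotheses (i)--(iii) of Lemma~\ref{lem:technical} via (A1), (A5), \eqref{eq:g^n}, axiom (A3), and the ping-pong condition, and then apply the technical lemma. The only nitpick is that $f(x,x^{-1})=\N{1}=0$ is axiom (A4) rather than a consequence of (A5), but this does not affect the argument.
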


\begin{proof}
    Define $f\colon \Sigma \times \Sigma \to \Lambda$ by putting $f(x,y):=\N{xy}$ for $x,y\in \Sigma$. Let us show that $f$ satisfies the assumptions of Lemma \ref{lem:technical}.

    Indeed, (i) follows from (A1), (A4) and (A5). Since by \eqref{eq:g^n} we have $f(a,a)=2\N{a}>0$ and $f(b,b)=2\N{b}>0$, we obtain (ii) from (A3), and (iii) from the assumption on $a,b$.
    Therefore, the application of Lemma \ref{lem:technical} to $f$ yields \eqref{eq:yz}.
\end{proof}

\begin{thm}\label{th:main}
    Let $\N{\blank}$ be a pseudo-length on a group $G$ and $a,b\in G$.
    If $(a,b)$ is a ping-pong pair, then for any cyclically reduced word ${w=x_1\dots x_n}$, $x_i\in \Sigma$ $(1\le i\le n)$, $n\ge 1$, the following holds:
    \begin{equation}\label{eq:main}
        2\N{w}= \left(\sum_{i=1}^{n-1}\N{x_i x_{i+1}}\right) + \N{x_n x_1}>0.
    \end{equation}
\end{thm}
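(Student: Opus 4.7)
The plan is to prove \eqref{eq:main} by strong induction on the length $n$ of the cyclically reduced word $w=x_1\dots x_n$. The base cases $n=1,2$ are immediate: for $n=1$, \eqref{eq:main} reduces to $2\N{x_1}=\N{x_1^2}$, an instance of \eqref{eq:g^n}; for $n=2$, the identity $x_2x_1=x_1^{-1}(x_1x_2)x_1$ together with axiom (A1) gives $\N{x_2x_1}=\N{x_1x_2}$, so the cyclic sum equals $2\N{w}$. Positivity in both cases follows because each term $\N{x_i x_{i+1}}$ is either $2\N{x_i}$ (when $x_i=x_{i+1}$) or of the form $\N{ab^{\pm 1}}$, all strictly positive under the ping-pong assumption.

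For the inductive step $n\ge 3$, I would first note that both sides of \eqref{eq:main} are invariant under cyclic shifts of $w$---the LHS by (A1), the RHS visibly---so $w$ may be rotated freely. The strategy is then to split $w=g\cdot h$ with $g=x_1\dots x_k$ and $h=x_{k+1}\dots x_n$ for a suitable $k$, chosen so that after seam cancellation and cyclic reduction, the element $gh^{-1}=x_1\dots x_k x_n^{-1}\dots x_{k+1}^{-1}$ becomes a cyclically reduced word of length strictly less than $n$, to which the inductive hypothesis applies. One then aims to invoke \eqref{eq:lem_second} to conclude $\N{w}=\N{gh}=\N{g}+\N{h}$, with the required strict inequality $\N{gh^{-1}}<\N{g}+\N{h}$ obtained by combining the inductive formulas for $\N{g}$, $\N{h}$, and $\N{gh^{-1}}$ with the strict "edge" bounds delivered by Lemma~\ref{lem:yz}. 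Substituting the inductive formulas for $\N{g}$ and $\N{h}$ into $2\N{w}=2\N{g}+2\N{h}$ and matching the "boundary" terms $\N{x_k x_1}$ and $\N{x_n x_{k+1}}$ contributed by the pieces against the seam $\N{x_k x_{k+1}}$ and wrap-around $\N{x_n x_1}$ of $w$ yields the RHS of \eqref{eq:main}.

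The hard part will be the case analysis. The split point $k$ (and possibly a prior cyclic rotation of $w$) must be adjusted to the coincidence pattern among the letters $x_1,\dots,x_n$, because configurations such as $x_k=x_{k+1}$, $x_n=x_1$, or $x_n=x_{k+1}$ alter the cancellation behaviour of $gh^{-1}$, the cyclic-reducedness of $g$ and $h$, and the validity of the bookkeeping that recombines their inductive formulas. In borderline cases where no split achieves the strict inequality $\N{gh^{-1}}<\N{g}+\N{h}$, one must instead invoke the full force of axiom (A3) to rule out the alternative $\N{gh}=\N{gh^{-1}}>\N{g}+\N{h}$ and still conclude $\N{gh}=\N{g}+\N{h}$; Lemma~\ref{lem:yz} remains the decisive tool throughout. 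Positivity of $2\N{w}$ is then clear from the explicit cyclic sum, via the same term-by-term argument as in the base cases.
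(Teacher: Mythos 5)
Your inductive framework---cyclic-shift invariance, base cases $n\le 2$, splitting $w$ so that $gh^{-1}$ cancels down to a shorter cyclically reduced word, then closing with \eqref{eq:lem_second} and Lemma~\ref{lem:yz}---is essentially the paper's main case: choosing the split point so that the two pieces share a maximal common suffix $u$ is the same as writing $w=v_1uv_2u$ for a maximal repeating subword $u$, and the bookkeeping you describe does recombine correctly precisely for that choice (for an arbitrary split point the boundary terms $\N{x_kx_1}+\N{x_nx_{k+1}}$ do \emph{not} match $\N{x_kx_{k+1}}+\N{x_nx_1}$). You would still need to treat separately the degenerate configurations where one of $v_1,v_2$ is empty, where $v_1$ is not cyclically reduced, and where $w=u^2$, since there the induction hypothesis cannot be applied verbatim to $gh^{-1}$; these are manageable but require their own small arguments.

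The genuine gap is the case where $w$ has no repeating subword at all, i.e.\ where $w$ is, up to cyclic shift and inversion, the commutator $aba^{-1}b^{-1}$. There no rotation and no split point produces any cancellation in $gh^{-1}$, so your main mechanism is unavailable; worse, your stated fallback---``rule out $\N{gh}=\N{gh^{-1}}>\N{g}+\N{h}$ and still conclude $\N{gh}=\N{g}+\N{h}$''---would prove something false. The target value is $\N{[a,b]}=\N{ab}+\N{ab^{-1}}$, and for \emph{every} two-piece split of $aba^{-1}b^{-1}$ this strictly exceeds $\N{g}+\N{h}$: e.g.\ for $g=aba^{-1}$, $h=b^{-1}$ one has $\N{g}+\N{h}=2\N{b}$, while (A3) together with the ping-pong hypothesis gives $\N{ab}+\N{ab^{-1}}>2\N{b}$. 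So the first alternative of (A3), the one you propose to rule out, is exactly the one that holds here. The paper handles this case by a two-step detour: it first evaluates $\N{aba^{-1}b}=\N{ab}+\N{a^{-1}b}$ via \eqref{eq:lem_second} (the word $aba^{-1}b$ \emph{does} contain a repeated letter, and $(ab)(a^{-1}b)^{-1}=a^2$ is short), then shows $\N{aba^{-1}b}>\N{aba^{-1}}+\N{b^{-1}}$ and applies \eqref{eq:lem_first} to conclude $\N{aba^{-1}b^{-1}}=\N{aba^{-1}b}$. An argument of this kind is indispensable; without it your induction does not close.
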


\begin{proof}
    Identify $F(a,b)$ with $\Sr$ canonically.
    We can see that the function $F(a,b)\ni w \mapsto \N{w}$ is a pseudo-length on $F(a,b)$, and if we prove the claim for $F(a,b)$ with this pseudo-length, the claim for $G$ with $\N{\blank}$ will follow. Therefore, we assume that $G=F(a,b)$ for the rest of the proof. Note also that, by (A1) and (A5), both sides of \eqref{eq:main} do not change if we replace $w$ by $w^{-1}$ or a cyclic shift of $w$. Moreover, $\N{a^{\pm 1}b^{\pm 1}}>0$, $\N{a^{\pm2}}>0$ and $\N{b^{\pm 2}}>0$ by the assumption on $a,b$, (A1), (A5) and \eqref{eq:g^n}; so the right-hand side of \eqref{eq:main} is always positive.

    We will prove \eqref{eq:main} by induction on $n$. Since $2\N{g}=\N{g^2}$ by \eqref{eq:g^n} and $2\N{gh}=\N{gh}+\N{hg}$ by (A1) for all $g,h\in G$, the equation \eqref{eq:main} is true for $n\in \{1,2\}$. Assume that $n>2$ and \eqref{eq:main} is true for all $w\in \Scr$ of length less than $n$. We distinguish two cases depending on whether $w$ contains a repeating subword.

    \medskip

    Case 1: $w$ does not contain a repeating subword. Then $n\le 4$ and $n\neq 3$ (since $w$ is cyclically reduced). Thus, up to a cyclic shift, $w$ is one of the commutators $[a,b]$, $[a,b^{-1}]$. Since $[a,b^{-1}]^{-1}=[b^{-1},a]$ is a cyclic shift of $[a,b]$, we only need to show \eqref{eq:main} for $w=[a,b]=aba^{-1}b^{-1}$.
    Applying \eqref{eq:yz} for $x:=b$, $y:=a^{-1}$, $z:=a$, we obtain
    \[\N{ab(a^{-1}b)^{-1}}=\N{a^2}<\N{ba^{-1}}+\N{ba}=\N{ab}+\N{a^{-1}b}.\] Then, from \eqref{eq:lem_second} with $g:=ab$, $h:=a^{-1}b$, we deduce that 
    \[
    \N{aba^{-1}b}=\N{ab}+\N{a^{-1}b}.
    \]
    By \eqref{eq:yz} with $x:=a$, $y:=b^{-1}$, $z=b$, we also have
    $\N{ab^{-1}}+\N{ab}>\N{b^2}$; so by (A1), (A5) and \eqref{eq:g^n} we obtain 
    \[\N{aba^{-1}b}=\N{ab}+\N{a^{-1}b}>2\N{b}=\N{aba^{-1}}+\N{b^{-1}}.\]
    The application of \eqref{eq:lem_first} for $g:=aba^{-1}$ and $h:=b^{-1}$ yields $\N{aba^{-1}b^{-1}}=\N{aba^{-1}b}=\N{ab}+\N{ab^{-1}}$. Hence,
    \[2\N{aba^{-1}b^{-1}}=2\N{ab}+2\N{ab^{-1}}=\N{ab}+\N{ba^{-1}}+\N{a^{-1}b^{-1}}+\N{b^{-1}a},\]
    which proves \eqref{eq:main} for $w=aba^{-1}b^{-1}$.

    \medskip
    
    Case 2: $w$ contains a repeating subword $u=x_1\dots x_k$, $x_i\in \Sigma$ ${(1\le i\le k)}$, of maximal length $k\ge 1$. We may assume that $w=v_1 u v_2 u$ for some ${v_1,v_2\in \Sr}$. Let us consider four subcases.

    \smallskip

    Subcase 2a: $v_1\ne 1$ and $v_2\ne 1$. Let us write  $v_1=y_1 \dots y_m$, $v_2=z_1\dots z_s$, $m,s\ge 1$, as reduced words over $\Sigma$. Notice that, by the maximality of $u$, we have $y_1\ne z_1$ and $y_m\ne z_{s}$. Let us show that $g:=v_1u$ and $h:=v_2u$ satisfy the assumptions of \eqref{eq:lem_second}.
    Using the induction hypothesis, we calculate
    \begin{equation*}
        \begin{split}
        2\N{gh^{-1}}&=2\N{v_1 v_{2}^{-1}}=2\N{y_1\dots y_m z_{s}^{-1}\dots z_{1}^{-1}}\\
        &=\left(\sum_{i=1}^{m-1}\N{y_i y_{i+1}}\right) + \N{y_m z_{s}^{-1}} +
        \left(\sum_{i=1}^{s-1}\N{z_i z_{i+1}}\right) + \N{z_{1}^{-1}y_1},\\
        2\N{g}&=2\N{v_1u}=2\N{y_1\dots y_m x_1 \dots x_{k}}\\
        &=\left(\sum_{i=1}^{m-1}\N{y_i y_{i+1}}\right) + \N{y_m x_1} + \left(\sum_{i=1}^{k-1} \N{x_i x_{i+1}}\right) + \N{x_k y_1}>0,\\
        2\N{h}&=2\N{v_2u}=2\N{z_1\dots z_s x_1 \dots x_{k}}\\
        &=\left(\sum_{i=1}^{s-1}\N{z_i z_{i+1}}\right) +\N{z_s x_1} + \left(\sum_{i=1}^{k-1} \N{x_i x_{i+1}}\right) + \N{x_k z_1}>0.
        \end{split}
    \end{equation*}
    After subtracting the repeating sums from both sides, the inequality $2\N{gh^{-1}}<2\N{g}+2\N{h}$ becomes equivalent to
     \begin{equation}\label{eq:inequality_in_proof}
        \N{y_m z_{s}^{-1}} + \N{z_1^{-1} y_1} < \N{y_m x_1} + \N{x_k y_1} + \N{z_s x_1} + \N{x_k z_1} + 2 \sum_{i=1}^{k-1} \N{x_i x_{i+1}}.
    \end{equation}
    Applying Lemma \ref{lem:yz} twice, we get $\N{y_m z_{s}^{-1}}<\N{y_m x_1}+\N{z_s x_1}$ and $\N{z_1^{-1} y_1}<\N{x_k y_1}+\N{x_k z_1}$, from which \eqref{eq:inequality_in_proof} follows.   
    Now we conclude from \eqref{eq:lem_second} that $2\N{w}=2\N{gh}=2\N{g}+2\N{h}$, which equals the right-hand side of \eqref{eq:main} for $w=gh=y_1\dots y_m x_1\dots x_k z_1\dots z_s x_1\dots x_k$.

    \smallskip

    Subcase 2b: $v_2=1\ne v_1=y_1\dots y_m$ and $v_1$ is cyclically reduced. Let us show that $g:=v_1u$ and $h:=u$ satisfy the assumptions of \eqref{eq:lem_second}.
    By the induction hypothesis,
    \begin{equation*}
        \begin{split}
        2\N{gh^{-1}}&=2\N{v_1}=2\N{y_1\dots y_m}
        =\left(\sum_{i=1}^{m-1}\N{y_i y_{i+1}}\right) + \N{y_m y_1},\\
        2\N{g}&=2\N{v_1u}=2\N{y_1\dots y_m x_1 \dots x_{k}}\\
        &=\left(\sum_{i=1}^{m-1}\N{y_i y_{i+1}}\right) + \N{y_m x_1} + \left(\sum_{i=1}^{k-1} \N{x_i x_{i+1}}\right) + \N{x_k y_1}>0,\\
        2\N{h}&=2\N{u}=2\N{x_1 \dots x_{k}}
        =\left(\sum_{i=1}^{k-1} \N{x_i x_{i+1}}\right) + \N{x_k x_1}>0.
        \end{split}
       \end{equation*}
        The inequality $2\N{gh^{-1}}<2\N{g}+2\N{h}$ is equivalent to
        \begin{equation}\label{eq:second_ineq_in_proof}
        \N{y_m y_1} < \N{y_m x_1} + \N{x_k y_1} + \N{x_k x_1}+2\left(\sum_{i=1}^{k-1} \N{x_i x_{i+1}}\right).
        \end{equation}
        If $x_1=y_1$ or $x_k=y_m$, \eqref{eq:second_ineq_in_proof} is clearly satisfied. Assume that $x_1\ne y_1$ and $x_k\ne y_m$.
        Without loss of generality, let $y_1 = a$; then  $y_m\in\{a,b^{\pm 1}\}$ because $v_1\in\Scr$.
        Notice that $y_m\ne x_1^{-1}\ne x_k \ne y_{1}^{-1}$ since $w=v_1uu\in \Scr$.
        
        If $y_m = a$, then $x_1 = x_k \in \{b, b^{-1}\}$. Now $\N{y_m y_1}=2\N{a}$, $\N{y_m x_1}=\N{x_ky_1}=\N{ab^{\pm 1}}$ and $\N{x_kx_1}=2\N{b}$. By the assumption of the theorem, $2\N{a}<2\N{ab^{\pm 1}}+2\N{b}$, from which \eqref{eq:second_ineq_in_proof} follows.
        
        If $y_m=b^{\pm 1}$, then $x_1\in \{a^{-1},y_m\}$, $x_k\in\{a,y_{m}^{-1}\}$ and $x_k \ne x_1^{-1}$, thus $x_kx_1 \in \{ay_m, y_{m}^{-1}a^{-1}\}$; so $\N{x_kx_1}=\N{ay_m}=\N{y_ma}=\N{y_my_1}$ and \eqref{eq:second_ineq_in_proof} holds as well.
        
        We conclude from \eqref{eq:lem_second} that $2\N{w}=2\N{gh}=2\N{g}+2\N{h}$, which equals the right-hand side of \eqref{eq:main} for $w=gh=y_1\dots y_m x_1\dots x_k x_1\dots x_k$.

        \smallskip

        Subcase 2c: $v_2=1\ne v_1=y_1\dots y_m$ and $v_1$ is not cyclically reduced. Then $v_1$ can be written as 
        \[v_1 = y_1 \dots y_p y_{p+1} \dots y_{p+q} y_{p}^{-1} \dots y_{1}^{-1},\]
        where $q\ge 1$ and $y_{p+1}\dots y_{p+q}$ is cyclically reduced. Let $g:=v_1u$ and $h:=u$. By the inductive assumption and (A1), we obtain
        \begin{equation*}
            \begin{split}
            2\N{gh^{-1}}&=2\N{v_1}=2\N{y_{p+1}\dots y_{p+q}}
            =\left(\sum_{i=p+1}^{p+q-1}\N{y_i y_{i+1}}\right) + \N{y_{p+q} y_{p+1}},\\
            \end{split}
        \end{equation*}
        while $2\N{g}$, $2\N{h}$ evaluate exactly as in Subcase 2b.

        From \eqref{eq:yz} with $x:=y_p$, $y:=y_{p+q}^{-1}$, $z:=y_{p+1}$, we get
        \[\N{y_{p+q}y_{p+1}}<\N{y_p y_{p+q}^{-1}}+\N{y_py_{p+1}}=\N{y_p y_{p+1}}+\N{y_{p+q}y_{p}^{-1}}.\]
        Therefore,
        \[2\N{gh^{-1}}=\left(\sum_{i=p+1}^{p+q-1}\N{y_i y_{i+1}}\right) + \N{y_{p+q} y_{p+1}}<\sum_{i=p}^{p+q}\N{y_i y_{i+1}}<2\N{g}+2\N{h}.\]
        As before, an application of \eqref{eq:lem_second} yields \eqref{eq:main} for $w=v_1uu$.

        \smallskip

        Subcase 2d: $v_1=v_2=1$, $w=u^2=x_1\dots x_k x_1\dots x_k$. By the induction hypothesis,
        \[2\N{u}=\left(\sum_{i=1}^{n-1}\N{x_i x_{i+1}}\right) + \N{x_n x_1}.\]
        Using \eqref{eq:g^n}, we get
        \[2\N{w}=2\N{u^2}=4\N{u}=2\left(\sum_{i=1}^{n-1}\N{x_i x_{i+1}}\right) + 2\N{x_k x_1},\]
        which is exactly \eqref{eq:main} for $w$.
\end{proof}

The following corollary is a reformulation of \eqref{eq:main} for $w$ written as a product of ``syllables''.

\begin{cor}\label{cor:syllables}
    Let $\N{\blank}$ be a pseudo-length on a group $G$ and $a,b\in G$.
    If $(a,b)$ is a ping-pong pair, then for $w= a^{m_1}b^{n_1}\dots a^{m_k}b^{n_k}$, where $k\ge 1$ and $m_1,\dots,m_k,n_1,\dots,n_k\in \mathbb{Z}\setminus\{0\}$, we have:
    \begin{equation}\label{eq:syllables}
    \N{w}=\N{a}\sum_{i=1}^{k}(\lvert m_i\rvert - 1) + \N{b} \sum_{i=1}^{k} (\lvert n_i\rvert - 1) + \frac{N}{2}\N{ab^{-1}} + \frac{2k -N}{2} \N{ab},
    \end{equation}
    where $N$ denotes the number of sign changes in the sequence $(m_1,n_1,\dots,m_k,n_k,m_1)$, i.e., 
    $N:=\lvert\{1\le i\le k \colon m_i n_i <0\}\rvert + \lvert\{i\le i\le k\colon n_i m_{i+1}<0\}\rvert$, where $m_{k+1}:=m_{1}$.
\end{cor}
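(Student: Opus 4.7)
The plan is to expand $w$ into a reduced word over $\Sigma$ and invoke Theorem \ref{th:main} directly. Setting $\epsilon_i := \sgn(m_i)$ and $\eta_i := \sgn(n_i)$, the word $w$ reads, in canonical form, as $|m_1|$ copies of $a^{\epsilon_1}$ followed by $|n_1|$ copies of $b^{\eta_1}$, then $|m_2|$ copies of $a^{\epsilon_2}$, and so on, finishing with $|n_k|$ copies of $b^{\eta_k}$. This word is reduced (no $m_i$ or $n_i$ is zero) and cyclically reduced (it begins with some $a^{\pm 1}$ and ends with some $b^{\pm 1}$), so Theorem \ref{th:main} applies and yields $2\N{w}$ as a cyclic sum of terms $\N{x_j x_{j+1}}$.

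The next step is to classify the consecutive-letter pairs appearing on the right-hand side of \eqref{eq:main} (including the wrap-around $\N{x_n x_1}$) into three types: intra-syllable $a$-pairs, intra-syllable $b$-pairs, and mixed pairs spanning two adjacent syllables. For intra-syllable pairs, \eqref{eq:g^n} gives $\N{a^{\epsilon}a^{\epsilon}} = 2\N{a}$ and $\N{b^{\eta}b^{\eta}} = 2\N{b}$. For a mixed pair, axioms (A1) and (A5) imply $\N{a^{\epsilon}b^{\eta}} = \N{b^{\eta}a^{\epsilon}} = \N{a^{-\epsilon}b^{-\eta}}$, so the value depends only on the product $\epsilon\eta$: it equals $\N{ab}$ when $\epsilon = \eta$ and $\N{ab^{-1}}$ when $\epsilon = -\eta$.

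Finally, I count contributions. Within the $i$-th $a$-syllable there are $|m_i| - 1$ intra-$a$ pairs and within the $i$-th $b$-syllable there are $|n_i| - 1$ intra-$b$ pairs, which together account for the terms $2\N{a}\sum_i(|m_i|-1) + 2\N{b}\sum_i(|n_i|-1)$ on the left-hand side of \eqref{eq:syllables} multiplied by $2$. There are exactly $2k$ mixed pairs: the $k$ transitions $a^{\epsilon_i}\to b^{\eta_i}$ inside each syllable pair and the $k$ transitions $b^{\eta_i}\to a^{\epsilon_{i+1}}$ between consecutive syllable pairs (indices taken cyclically, so the last such transition is the wrap-around $\N{x_n x_1}$). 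A mixed pair contributes $\N{ab^{-1}}$ precisely when the two corresponding consecutive entries of the sign sequence $(m_1,n_1,\dots,m_k,n_k,m_1)$ have opposite signs, giving $\N{ab^{-1}}$ in $N$ cases and $\N{ab}$ in $2k - N$ cases. Summing all contributions and dividing by $2$ yields \eqref{eq:syllables}; note that $N$ is automatically even as the number of sign changes in a cyclic sequence, so $N/2 \in \mathbb{Z}$ and the expression is unambiguous.

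The argument is essentially combinatorial bookkeeping over the reduced-word spelling of $w$. No substantive obstacle is anticipated beyond carefully handling the cyclic wrap-around and confirming that the mixed-pair contribution depends only on the sign product, which is immediate from (A1) and (A5).
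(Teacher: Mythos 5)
Your proposal is correct and follows essentially the same route as the paper: expand $w$ as a cyclically reduced word, apply Theorem \ref{th:main}, evaluate intra-syllable pairs via \eqref{eq:g^n} and mixed pairs via (A1) and (A5), and count, noting that $N$ is even so the right-hand side of \eqref{eq:syllables} is well defined. Your write-up is simply a more detailed version of the paper's one-paragraph argument.
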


\begin{proof}
    Notice that for $x,y\in \{a,b\}$, $x\ne y$, $\varepsilon,\eta\in\{-1,1\}$, we have $\N{x^{\varepsilon}y^{\eta}}=\N{ab}$ if $\sgn{\varepsilon}=\sgn{\eta}$, and $\N{x^{\varepsilon}y^{\eta}}=\N{ab^{-1}}$ if $\sgn{\varepsilon}\ne \sgn{\eta}$. Expanding the right-hand side of \eqref{eq:main} using the above relations and $\N{x^2}=2\N{x}$ for $x\in\Sigma$, we get $2\N{w}=2r$, where $r$ denotes the right-hand side of \eqref{eq:syllables}; note that $r$ is a well-defined element of $\Lambda$ because $N$ is an even number.
\end{proof}

\begin{rem}
    In the special case when $\N{ab}=\N{ab^{-1}}>\N{a}+\N{b}$, the formula \eqref{eq:syllables} becomes
    $\N{w}=k\N{ab} +\N{a}\sum_{i=1}^{k}(\lvert m_i\rvert - 1) + \N{b} \sum_{i=1}^{k} (\lvert n_i\rvert - 1)$, which is exactly \cite[Lemma 1.2]{Parry}.
    On the other hand, if $\N{ab}=\N{a}+\N{b}\ge \N{ab^{-1}}$, we get 
    $\N{w}=\N{a}\sum_{i=1}^{k}\lvert m_i\rvert + \N{b}\sum_{i=1}^{k}\lvert n_i\rvert - \frac{N}{2} (\N{ab}-\N{ab^{-1}})$. This case was considered in \cite[Lemma 1.6]{Parry}. Parry showed there the inequality $\N{w}\le \N{a}\sum_{i=1}^{k}\lvert m_i\rvert + \N{b}\sum_{i=1}^{k}\lvert n_i\rvert$ and noted that the equality holds if $m_1,\dots,m_k,n_1,\dots,n_k$ have the same sign (i.e., $N=0$). He also proved that if the equality holds and $N>0$, then $\N{ab^{-1}}=\N{a}+\N{b}$. We have proved the converse implication. Moreover, our formula covers the missing case when $N >0$ and $\N{ab^{-1}}<\N{a}+\N{b}=\N{ab}$, giving the precise value of $\N{w}$.
\end{rem}

Our next result says that, under certain conditions, a formula similar to \eqref{eq:main} well defines a pseudo-length on $F(a,b)$. The proof is rather long and divided into several cases. In some of them much effort is needed to show that the axiom (A0) is satisfied. 

\begin{prop}\label{prop:existence}
    Let $\alpha,\beta,\gamma,\delta \in \Lambda$ be such that
    \begin{gather}
        \gamma - \alpha - \beta \in 2\Lambda,\quad \delta - \alpha - \beta \in 2\Lambda;\label{eq:2L}\\
        \text{either } \gamma=\delta > \alpha +\beta  \quad \text{or } \max\{\gamma, \delta\}=\alpha+\beta;\label{eq:either_or}\\
        \alpha>0, \;\beta>0,\; \lvert\alpha-\beta\rvert < \min\{\gamma,\delta\}.\label{eq:good_pair_alfa_beta}
    \end{gather}
    Define $f\colon \Sigma\times \Sigma\to \Lambda$ as follows:
    \begin{equation}\label{eq:cond_on_f}
        \begin{array}{c}
            f(a,a)=2\alpha, \, f(b,b)=2\beta, \, f(a,b)=\gamma, \, f(a,b^{-1})=\delta,\\
        f(x,y)=f(y,x)=f(y^{-1},x^{-1}), \, f(x,x^{-1})=0 \quad \text{for all } x,y\in \Sigma.
        \end{array}
    \end{equation}
    Let $\N{\blank}\colon F(a,b)\to \Lambda_{+}$ be defined by $\N{1}:=0$ and, for $g\ne 1$:
    \begin{equation}\label{eq:w}
        \N{g}:=\frac{1}{2} \left(\sum_{i=1}^{n-1}f(x_i, x_{i+1}) + f(x_n, x_1)\right),
    \end{equation}
    where $x_1 \dots x_n \in \Scr$, $x_i\in\Sigma$ $(1\le i\le n)$, is any cyclically reduced word representing an element of $F(a,b)$ conjugated to $g$. Then $\N{\blank}$ is a pseudo-length on $F(a,b)$.
\end{prop}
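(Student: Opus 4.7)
The plan is to verify axioms (A0)--(A3) for the function $\N{\blank}$ defined by \eqref{eq:w}. I would begin with three preliminary checks: that the right-hand side of \eqref{eq:w} lies in $\Lambda$ (not just $\frac{1}{2}\Lambda$), that it is independent of the cyclically reduced representative chosen, and that it is non-negative with $\N{g}=0$ only for $g=1$. For the first, the key observation is that in any cyclically reduced word the pairs $(x_i,x_{i+1})$ and $(x_n,x_1)$ in which one letter lies in $\{a,a^{-1}\}$ and the other in $\{b,b^{-1}\}$ occur in even number, because $a$-runs and $b$-runs cyclically alternate. Combined with \eqref{eq:2L} (which gives $\gamma\equiv\delta\equiv\alpha+\beta\pmod{2\Lambda}$) and $f(x,x)\in\{2\alpha,2\beta\}$, this places the bracketed sum in $2\Lambda$. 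Invariance under cyclic shifts is immediate from the cyclic symmetry of \eqref{eq:w}, which also delivers (A1), since two cyclically reduced representatives of a conjugacy class differ only by a cyclic shift. Positivity follows from $\alpha,\beta>0$ and $\gamma,\delta>0$ (the latter from \eqref{eq:good_pair_alfa_beta}).

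Next I would pass from \eqref{eq:w} to the explicit ``syllable'' form of Corollary \ref{cor:syllables}: for a cyclically reduced word $w=a^{m_1}b^{n_1}\dots a^{m_k}b^{n_k}$ one extracts a closed expression for $\N{w}$ in terms of the exponents and the sign-change count $N$. This derivation is purely combinatorial and does not use the pseudo-length axioms; it reduces the remaining verifications to piecewise-linear identities. To check (A0), (A2), (A3), I would then fix arbitrary $g,h\in F(a,b)$, write them as reduced words, analyze the cancellation in $gh$ and $gh^{-1}$ followed by cyclic reduction, and compare $\N{gh}$, $\N{gh^{-1}}$ and $\N{g}+\N{h}$ using the syllable formula. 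The natural case split is governed by (a) which alternative of \eqref{eq:either_or} holds, and (b) the combinatorial shape of $g$ and $h$ --- in particular, whether they are cyclically reduced, which letters appear at the boundary of cancellation, and the sign pattern there.

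Axioms (A2) and (A3) express a dichotomy familiar from tree actions: either $\N{gh}=\N{gh^{-1}}>\N{g}+\N{h}$ (the ``separated axes'' case) or $\max\{\N{gh},\N{gh^{-1}}\}=\N{g}+\N{h}$ (the ``overlapping axes'' case). In each case the correct alternative can be read off the syllable formula once the cancellation structure has been unpacked, so I expect these to be a mechanical if tedious verification.

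The main obstacle is axiom (A0): when $\N{gh}-\N{g}-\N{h}>0$, one must show this difference lies in $2\Lambda$. In the branch $\gamma=\delta>\alpha+\beta$ of \eqref{eq:either_or}, parity is essentially automatic, since every relevant quantity is a $\mathbb{Z}$-linear combination of $\alpha,\beta,\gamma$ whose residue mod $2\Lambda$ is already controlled by \eqref{eq:2L}. The delicate branch is $\max\{\gamma,\delta\}=\alpha+\beta$: here, depending on the sign-change counts of $g$, $h$ and $gh$, the difference can pick up terms of the form $\frac{1}{2}(\gamma-\delta)\cdot(\text{integer})$, and showing that this integer is always even --- so that \eqref{eq:2L} again places the difference in $2\Lambda$ --- is where, as the author hints, most of the effort goes.
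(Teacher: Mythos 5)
Your overall strategy --- verify the axioms directly, with the parity of \eqref{eq:w} coming from the even number of mixed $a$/$b$ adjacencies in a cyclically reduced word, cyclic-shift invariance giving (A1), and a case analysis on the cancellation structure of $gh$ and $gh^{-1}$ for the remaining axioms --- is the same as the paper's, and your preliminary checks are correct. But the proposal stops exactly where the work begins, and two ingredients the paper relies on are absent rather than merely unelaborated. First, to compare $\N{g\cdot h}$ (computed on a cyclically reduced representative) with $\N{g}+\N{h}$ you need control over how the quantity in \eqref{eq:w} changes under a reduction $yx^{-1}xz\to yz$; the paper isolates this as Lemma \ref{lem:technical}, the inequality $f(y^{-1},z)\le f(x,y)+f(x,z)$ together with an exact criterion for strictness. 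Every subcase of (A3) --- deciding which alternative holds according to the boundary letters of $g$ and $h$ --- runs through this inequality; ``read off the syllable formula'' does not by itself determine which of $\N{gh}$, $\N{gh^{-1}}$ attains $\N{g}+\N{h}$.

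Second, and more seriously, when $g=ug'u^{-1}$ and $h=vh'v^{-1}$ are not cyclically reduced, $g\cdot h$ and $g\cdot h^{-1}$ are conjugate to words $g'wh'w^{-1}$ and $g'wh'^{-1}w^{-1}$ with $w=u^{-1}\cdot v$ possibly long, and here the first alternative of (A3) always holds. The paper proves this as a separate Claim, and the (A0) parity for it reduces to showing that certain boundary sums satisfy $S_1+S_2-2f(w_r,w_1)\in 4\Lambda$ --- a case analysis over the possible boundary letters that is needed in \emph{both} branches of \eqref{eq:either_or}, not only when $\max\{\gamma,\delta\}=\alpha+\beta$. Relatedly, your diagnosis of the (A0) difficulty is slightly off: since all sign-change counts $N$ are even and $\gamma-\delta\in 2\Lambda$ by \eqref{eq:2L}, any term of the form $\tfrac{1}{2}(\gamma-\delta)\cdot(\text{integer})$ lies in $2\Lambda$ automatically; what actually has to be verified is that the integer coefficients of $\alpha$ and $\beta$, after substituting $\gamma\equiv\delta\equiv\alpha+\beta\pmod{2\Lambda}$, come out even, and that is where the boundary-letter bookkeeping lives. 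So the plan is sound in outline, but as it stands it is a roadmap rather than a proof.
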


\begin{proof}
    Define a function $\phi\colon \Sigma^{*}\to \Lambda$ by putting $\phi(1):=0$ and
    \begin{equation}\label{eq:phi}
   \phi(w):=\left(\sum_{i=1}^{n-1}f(x_i, x_{i+1})\right) + f(x_n, x_1) \quad \text{for } w=x_1\dots x_n \in \Sigma^{*},
    \end{equation}
    where $n\ge 1$ and $x_i\in \Sigma$ $(1\le i\le n)$. Observe that $\phi$ is invariant with respect to any cyclic shift of $w$. Moreover, it follows from \eqref{eq:cond_on_f} that $\phi(w^{-1})=\phi(w)$ for all $w\in \Sigma^{*}$.

    Assume that $w$ is a reduced word. We claim that $\phi(w)\in 2\Lambda$. Indeed, if $w=a^{n}$ for some $n\in \mathbb{Z}$, then $\phi(w)=\lvert n\rvert f(a,a)=2\lvert n\rvert \alpha$; similarly $\phi(b^n)=2\lvert n\rvert \beta$. If $w$ is cyclically reduced, we may assume that $w= a^{m_1}b^{n_1}\dots a^{m_k}b^{n_k}$, where $k\ge 1$ and $m_1,\dots,m_k,n_1,\dots,n_k\in \mathbb{Z}\setminus\{0\}$. Then, by expanding the right-hand side of \eqref{eq:phi} as in Corollary \ref{cor:syllables}, we obtain
    \[\phi(w)=2\alpha\sum_{i=1}^{k}(\lvert m_i\rvert - 1) + 2\beta \sum_{i=1}^{k} (\lvert n_i\rvert - 1) + N\delta + (2k-N) \gamma\]
    for an even number $N$, so $\phi(w)\in 2\Lambda$. If $w$ is not cyclically reduced, we may assume without loss of generality that $w=a^{m_1}b^{n_1}\dots a^{m_k}b^{n_k}a^{m_{k+1}}$, where $k\ge 1$, $m_1,\dots,m_k,m_{k+1},n_1,\dots,n_k\in \mathbb{Z}\setminus\{0\}$ and $\sgn{m_{k+1}}\ne \sgn{m_1}$. We then have
    \[
    \begin{split}
    \phi(w)=2\alpha\sum_{i=1}^{k+1}(\lvert m_i\rvert - 1) + 2\beta \sum_{i=1}^{k} (\lvert n_i\rvert - 1) + N\delta + (2k-N) \gamma \\
    = 2\alpha\sum_{i=1}^{k+1}(\lvert m_i\rvert - 1) + 2\beta \sum_{i=1}^{k} (\lvert n_i\rvert - 1) + 2k\gamma + N(\delta - \gamma),
    \end{split}\]
    where $N$ denotes the number of sign changes in the sequence $(m_1,n_1,\dots,m_k,n_k,m_{k+1})$. Since $\delta - \gamma \in 2\Lambda$ by \eqref{eq:2L}, we get $\phi(w)\in 2\Lambda$.

    So far we have shown that the formula \eqref{eq:w} well defines a function from $F(a,b)$ to $\Lambda_{+}$, and this function satisfies the axioms (A1), (A4) and (A5). Moreover, $\N{g}=0$ if and only if $g=1$. Since (A3) implies (A2) for $g,h$ with $\N{g}>0$, $\N{h}>0$, and (A2) holds trivially if $g=1$ or $h=1$, we only need to show that $\N{\blank}$ satisfies (A3) and (A0).

    It follows from \eqref{eq:either_or}, \eqref{eq:good_pair_alfa_beta} and \eqref{eq:cond_on_f} that $f$ satisfies the assumptions of Lemma \ref{lem:technical}.
    Let us make an important observation. If $u_1,u_2\in \Sigma^{*}$ and $x,y,z\in \Sigma$, then
    $\phi(u_1 yx^{-1}xz u_2)-\phi(u_1 y z u_2)=f(y,x^{-1})+f(x,z)-f(y,z)$. By \eqref{eq:technical} with $y^{-1}$ in the place of $y$ and \eqref{eq:cond_on_f}, $\phi(u_1 yx^{-1}xz u_2)-\phi(u_1 y z u_2)\ge 0$.
    Similarly $\phi(y x^{-1} x)-\phi(y)=f(y,x^{-1})+f(x,y)-f(y,y)\ge 0$ and $\phi(x^{-1}x)-\phi(1)=0$.
    Therefore, any reduction in $w\in \Sigma^{*}$ does not increase the value of $\phi$. Since $\phi$ is invariant under cyclic shifts, the same holds for a cyclic reduction of $w\in\Sigma^{*}$.

    We proceed to the proof of (A3) and (A0) for $g,h \in F(a,b)\setminus\{1\}$. Note that if the second alternative in (A3) holds, (A0) becomes trivial. To show (A0), we will often make use of the fact that $\gamma-\delta,\gamma+\delta \in 2\Lambda$, which is a consequence of \eqref{eq:2L}.

    In what follows, given $u,v\in \Sr$, we will write $u\cdot v$ for the reduced word obtained from $uv$ by performing as much cancellation as possible.
    We identify $F(a,b)$ with $\Sr$ canonically, so that $u\cdot v$ represents the product of $u$ and $v$ in $F(a,b)$.

    Assume that $g,h\in \Sr\setminus\{1\}$. We will consider several cases.

    \medskip

    Case 1: both $g$ and $h$ are cyclically reduced. Let $g=x_1 \dots x_n$, $n\ge 1$, $x_i\in \Sigma$ $(1\le i\le n)$, and $h=y_1 \dots y_m$, $m\ge 1$, $y_i\in \Sigma$ $(1\le i\le m)$.
    Using \eqref{eq:phi}, we calculate
    \begin{equation}\label{eq:phi_gh}
        \begin{split}
        \phi(gh)&=\phi(x_1 \dots x_n y_1 \dots y_m)
        =\phi(x_1\dots x_n)-f(x_n,x_1)+f(x_n,y_1)\\
        &+\phi(y_1\dots y_m)-f(y_m,y_1)+f(y_m,x_1)\\
        &=\phi(g)+\phi(h)-f(x_n,x_1)+f(x_n,y_1)-f(y_m,y_1)+f(y_m,x_1)
        \end{split}
    \end{equation}
    and similarly
    \begin{align}
        \phi(gh^{-1}) &= \phi(x_1 \dots x_n y_{m}^{-1} \dots y_{1}^{-1})= \phi(x_1 \dots x_n) - f(x_n, x_1) + f(x_n, y_{m}^{-1}) \notag \\
        &+ \phi(y_{m}^{-1} \dots y_{1}^{-1}) - f(y_{1}^{-1}, y_{m}^{-1}) + f(y_{1}^{-1}, x_1) \label{eq:phi_gh_-1} \\
        &= \phi(g) + \phi(h) - f(x_n, x_1) + f(x_n, y_{m}^{-1}) - f(y_m, y_1) + f(y_{1}^{-1}, x_1) \notag
    \end{align}

    Subcase 1a: $x_1=y_1$. Then 
    $\phi(gh)=\phi(g)+\phi(h)$ by \eqref{eq:phi_gh} and $gh$ is cyclically reduced, so
    $\N{g\cdot h}=\frac{1}{2} \phi(gh)=\frac{1}{2}(\phi(g)+\phi(h))=\N{g}+\N{h}$.
    We also have $\phi(gh^{-1})=\phi(g)+\phi(h)-f(x_n, x_1)+f(x_n, y_{m}^{-1})-f(y_m, x_1)$ by \eqref{eq:phi_gh_-1}.
    Since $f(x_n, y_{m}^{-1})\le f(x_n, x_1)+f(y_m, x_1)$ by Lemma \ref{lem:technical}, we get
    $\N{g\cdot h^{-1}}\le \frac{1}{2}\phi(gh^{-1})\le \frac{1}{2}(\phi(g)+\phi(h))=\N{g}+\N{h}$. Thus, we obtain $\max\{\N{g\cdot h},\N{g\cdot h^{-1}}\}=\N{g}+\N{h}$, as desired.

    \smallskip

    Subcase 1b: $x_n=y_m$. Then $g^{-1}$ and $h^{-1}$ have the same first letter and the situation can be reduced to Subcase 1a.

    \smallskip

    Subcase 1c: $x_1=y_{m}^{-1}$ or $x_n=y_{1}^{-1}$. Then we can apply Subcase 1a or 1b with $h^{-1}$ in the place of $h$.
    
    \smallskip

    After Subcases 1a--1c (and using that $g,h\in\Scr$), we may assume that the elements $x_1, x_{n}^{-1}, y_1, y_{m}^{-1}$ are pairwise distinct, and so they exhaust the entire $\Sigma$. It follows that both $gh$ and $gh^{-1}$ are cyclically reduced. We distinguish three remaining subcases according to which element of $\{x_{n}^{-1}, y_{m}^{-1}, y_1\}$ is equal to $x_{1}^{-1}$.

    \smallskip

    Subcase 1d: $x_{1}^{-1}=x_{n}^{-1}$, so $x_1=x_n$ and $y_1=y_m$. Assume without loss of generality that $x_1=a$, $y_1=b$. Then
    $\phi(gh)=\phi(g)+\phi(h)-f(a,a)+f(a,b)-f(b,b)+f(b,a)=\phi(g)+\phi(h)+2\gamma-2\alpha - 2\beta$ and
    $\phi(gh^{-1})=\phi(g)+\phi(h)+2\delta-2\alpha-2\beta$.

    If $\gamma = \delta > \alpha + \beta$, then
    $\N{g\cdot h}=\N{g\cdot h^{-1}}=\N{g}+\N{h}+\gamma-\alpha-\beta>\N{g}+\N{h}$; so the first alternative in (A3) holds. Moreover, $\N{g\cdot h}-\N{g}-\N{h}=\gamma-\alpha-\beta\in 2\Lambda$ by \eqref{eq:2L}, hence (A0) is satisfied.

    If $\max\{\gamma,\delta\}=\alpha + \beta$, then $\max\{\N{g\cdot h},\N{g\cdot h^{-1}}\}=\N{g}+\N{h}$.

    \smallskip

    Subcase 1e: $x_{1}^{-1}=y_{m}^{-1}$, so $x_1=y_m$ and $y_1=x_n$. Assume without loss of generality that $x_1=a$, $y_1=b$.
    Then $\phi(gh)=\phi(g)-f(b,a)+f(b,b)+\phi(h)-f(a,b)+f(a,a)=\phi(g)+\phi(h)+2\alpha+2\beta-2\gamma$ and
    $\phi(gh^{-1})=\phi(g)-f(b,a)+f(b,a^{-1})+\phi(h)-f(b,a)+f(b^{-1},a)=\phi(g)+\phi(h)+2\delta-2\gamma$.
    If $\gamma = \delta > \alpha + \beta$, then $\max\{\N{g\cdot h},\N{g\cdot h^{-1}}\}=\N{g\cdot h^{-1}}=\N{g}+\N{h}$.
    If $\max\{\gamma,\delta\}=\gamma=\alpha + \beta$, then $\max\{\N{g\cdot h},\N{g\cdot h^{-1}}\}=\N{g\cdot h}=\N{g}+\N{h}$. If $\max\{\gamma,\delta\}=\delta=\alpha + \beta>\gamma$, then $\N{g\cdot h}=\N{g\cdot h^{-1}}=\N{g}+\N{h}+\alpha + \beta - \gamma > \N{g}+\N{h}$, hence (A3) holds. Moreover, $\N{g\cdot h}-\N{g}-\N{h} = -(\gamma - \alpha -\beta)\in 2\Lambda$ by \eqref{eq:2L}.

    \smallskip
    
    Subcase 1f: $x_{1}^{-1}=y_1$, so $x_n=y_{m}^{-1}$. Then we can apply Subcase 1e with $h^{-1}$ in the place of $h$.

    \bigskip

    Before we split the proof into other cases, we prove a useful fact.

    \smallskip

    \noindent {\bf Claim.} Let $g',h',w\in \Sigma^{*} \setminus\{1\}$ and $W_{+}:= g'wh'w^{-1}$, $W_{-}:=g'wh'^{-1}w^{-1}$. If $g',h',W_{+},W_{-}\in \Scr$, then
    \begin{gather}
        \N{W_{+}}=\N{W_{-}}>\N{g'}+\N{h'},\label{eq:claim_1}\\ 
        \N{W_{+}}-\N{g'}-\N{h'}\in 2\Lambda.\label{eq:claim_2}
    \end{gather} 

    \smallskip

    \noindent {\em Proof of the Claim.} Let $g'=x_1\dots x_n$, $x_i\in\Sigma$ $(1\le i\le n)$, $h'=y_1\dots y_m$, $y_i\in\Sigma$ $(1\le i\le m)$, and $w=w_1 \dots w_r$, $w_i\in\Sigma$ $(1\le i\le r)$. We have $\N{W_+}=\frac{1}{2}\phi(W_+)$, $\N{W_{-}}=\frac{1}{2}\phi(W_{-})$, $\N{g'}=\frac{1}{2}\phi(g')$, $\N{h'}=\frac{1}{2}\phi(h')$.

    Let us calculate
    \begin{equation}
        \begin{split}
            \phi(W_{+})&=\phi(g')-f(x_n,x_1)+f(x_n,w_1)\\&+\phi(w)-f(w_r, w_1)
            +f(w_r,y_1)\\
            &+\phi(h')-f(y_m,y_1)+f(y_m,w_{r}^{-1})\\
            &+\phi(w^{-1})-f(w_{1}^{-1}, w_{r}^{-1})+f(w_{1}^{-1},x_1),
        \end{split}
    \end{equation}
    \begin{equation}
        \begin{split}
            \phi(W_{-})&=\phi(g')-f(x_n,x_1)+f(x_n,w_1)\\&+\phi(w)-f(w_r, w_1)
            +f(w_r,y_{m}^{-1})\\
            &+\phi(h'^{-1})-f(y_{1}^{-1},y_{m}^{-1})+f(y_{1}^{-1},w_{r}^{-1})\\
            &+\phi(w^{-1})-f(w_{1}^{-1}, w_{r}^{-1})+f(w_{1}^{-1},x_1).
        \end{split}
    \end{equation}
    It follows from the properties of $f$ and $\phi$ that $\phi(W_{+})=\phi(W_{-})$. Define the following sums:
    \begin{gather*}
    S_1:= f(x_n,w_1)+f(w_{1}^{-1},x_1)-f(x_n,x_1)=f(w_{1}^{-1},x_{n}^{-1})+f(w_{1}^{-1},x_1)-f(x_n,x_1),\\
    S_2:= f(y_m,w_{r}^{-1})+f(w_{r},y_1)-f(y_m,y_1)=f(w_r,y_{m}^{-1})+f(w_r,y_1)-f(y_m,y_1).
    \end{gather*}
    We have $S_1>0$ and $S_2>0$ by Lemma \ref{lem:technical}. Moreover,
    \begin{gather*}
    \N{W_+}-\N{g'}-\N{h'}=\N{W_{-}}-\N{g'}-\N{h'}\\
    =\frac12 \left(\phi(W_{+})-\phi(g')-\phi(h')\right)=\phi(w)+\frac12 \left(S_1+S_2\right)-f(w_r,w_1).
    \end{gather*}
    Since $S_1>0$, $S_2>0$ and $\phi(w)\ge f(w_r,w_1)$, we obtain \eqref{eq:claim_1}. Since $\phi(w)\in 2\Lambda$, the condition \eqref{eq:claim_2} is equivalent to 
    \begin{equation}\label{eq:S_1+S_2}
    S_1+S_2 - 2 f(w_r,w_1)\in 4\Lambda.
    \end{equation}
    Note that, since $x_{n}^{-1}\ne w_1 \ne x_1$ and $x_{n}^{-1}\ne x_1$, either $x_1=x_n$ and $w_1\in \Sigma\setminus\{x_1,x_{1}^{-1}\}$, or $x_1$, $x_n$ are powers of distinct letters in $\{a,b\}$ and $w_1\in \{x_{1}^{-1},x_{n}\}$. Similarly, either $y_1=y_m$ and $w_r\in \Sigma\setminus\{y_1,y_{1}^{-1}\}$, or $y_1$, $y_m$ are powers of distinct letters and $w_r\in \{y_1,y_{m}^{-1}\}$.

    \smallskip

    Suppose that $x_1=x_n=:x$ and $y_1=y_m=:y$. Without loss of generality, we assume that $x\in\{a,a^{-1}\}$.

    If $y\in\{a,a^{-1}\}$, then $w_1,w_r\in \{b,b^{-1}\}$, so
    $S_1=f(w_{1}^{-1},a)+f(w_{1}^{-1},a^{-1})-f(a,a)=\gamma + \delta - 2 \alpha$ and similarly $S_2=\gamma + \delta - 2\alpha$.
    Hence we have $S_1+S_2=2(\gamma + \delta)-4\alpha \in 4\Lambda$. Since $f(w_r,w_1)\in \{0, f(b,b)\}=\{0,2\beta\}$, we also have $2f(w_r,w_1)\in 4\Lambda$, which proves \eqref{eq:S_1+S_2}.

    If $y\in \{b,b^{-1}\}$, then $w_1\in \{b,b^{-1}\}$ and $w_r\in \{a,a^{-1}\}$, so
    $S_1=f(w_{1}^{-1},a)+f(w_{1}^{-1},a^{-1})-f(a,a)=\gamma + \delta - 2\alpha$
    and $S_2=f(w_r,b)+f(w_r,b^{-1})-f(b,b)=\gamma + \delta - 2\beta$. Moreover, $f(w_r,w_1)\in \{\gamma,\delta\}$.
    Hence, $S_1 + S_2 - 2f(w_r,w_1)$ equals $2\gamma - 2\alpha - 2\beta\in 4\Lambda$ or $2\delta - 2\alpha - 2\beta\in 4\Lambda$, thus \eqref{eq:S_1+S_2} is true.

    \smallskip

    Suppose now that $x_1\ne x_n$ and $y_1\ne y_m$. Then $w_1\in \{x_{1}^{-1},x_n\}$ and $w_r\in\{y_1,y_{m}^{-1}\}$. Hence,
    \begin{gather*}
        S_1 = f(w_{1}^{-1},w_{1}^{-1})+f(x_1,x_{n}^{-1})-f(x_1,x_n)=f(w_{1}^{-1},w_{1}^{-1})\pm(\gamma-\delta),\\
        S_2 = f(w_r,w_r)+f(y_1,y_{m}^{-1})-f(y_1,y_m)=f(w_r,w_r)\pm(\gamma-\delta).
    \end{gather*}
    If $w_{1}^{-1}=w_r\in\{a,a^{-1}\}$, then $S_1 + S_2 - 2f(w_r,w_1)=4\alpha + c(\gamma-\delta)$ with $c\in \{-2,0,2\}$, so the sum in \eqref{eq:S_1+S_2} belongs to $4\Lambda$. If $w_{1}^{-1}=w_r\in\{b,b^{-1}\}$, the argument is similar.

    If $w_1=w_r$, we may assume without loss of generality that $w_1=a=w_r$. Then $S_1 + S_2 - 2f(w_1,w_r)=4\alpha + c(\gamma-\delta)-4\alpha=c(\gamma-\delta)\in 4\Lambda$.

    If $w_1\in\{a,a^{-1}\}$ and $w_r\in\{b,b^{-1}\}$, then
    $S_1 + S_2 = 2 \alpha + 2\beta + c(\gamma - \delta)$ and $2f(w_r,w_1)\in \{2\gamma,2\delta\}$. Hence, the sum \eqref{eq:S_1+S_2} equals $2\alpha + 2\beta - 2\gamma + c(\gamma-\delta)\in 4\Lambda$ or $2\alpha + 2\beta - 2\delta + c(\gamma-\delta)\in 4\Lambda$.

    \smallskip

    Finally, suppose without loss of generality that $x_1=x_n=:x\in\{a,a^{-1}\}$, $y_1\ne y_m$.
    Then $w_1\in \{b,b^{-1}\}$, $w_r\in \{y_1,y_{m}^{-1}\}$ and 
    $S_1=\gamma + \delta - 2\alpha$, $S_2=f(w_r,w_r)\pm(\gamma-\delta)$.

    If $w_r\in \{a,a^{-1}\}$, then 
    $S_1+S_2=\gamma + \delta \pm(\gamma-\delta)\in \{2\gamma,2\delta\}$ and $2f(w_r,w_1)\in \{2\gamma,2\delta\}$, hence the sum in \eqref{eq:S_1+S_2} equals $c(\gamma-\delta)$ with $c\in\{-2,0,2\}$, so it belongs to $4\Lambda$.

    If $w_r\in \{b,b^{-1}\}$, then
    $S_1+S_2 \in \{2\gamma -2\alpha + 2\beta, 2\delta - 2\alpha + 2\beta\}$, which is a subset of $4\Lambda$ because, for example, $2\gamma - 2\alpha + 2\beta=2(\gamma-\alpha-\beta)+4\beta$. Since $2f(w_r,w_1)\in\{0,4\beta\} \subseteq 4\Lambda$, \eqref{eq:S_1+S_2} holds as well.

    \medskip

    We have thus proved the Claim.

    \bigskip

    Assume that at least one of $g,h\in \Sr\setminus\{1\}$ is not cyclically reduced.
    Then we can write
    \begin{align*}
    g=u g' u^{-1}\quad &\text{for } g'=x_1\dots x_n\in\Scr\setminus\{1\}, \ u=u_1\dots u_k\in \Sr,\\
    h=v h' v^{-1}\quad &\text{for } h'=y_1\dots y_m\in\Scr\setminus\{1\},\ 
    v=v_1\dots v_s\in\Sr,
    \end{align*}
    where $x_i,y_i,u_i,v_i\in \Sigma$ for all $i$ in the appropriate ranges and at least one of the words $u,v$ is nonempty. Notice that $\N{g}=\N{g'}$ and $\N{h}=\N{h'}$.
    Denote by $p$ the length of the longest common initial subword of $u$ and $v$.

    \medskip

    Case 2: $p<\min\{k,s\}$. Define $w:= u^{-1}\cdot v = u_{k}^{-1} \dots u_{p+1}^{-1} v_{p+1}\dots v_s$. Then $g\cdot h=ug' w h' v^{-1}$ is conjugated to $W_{+}:=g' w h' w^{-1}$,
    which is cyclically reduced. Similarly, $g \cdot h^{-1}$ is conjugated to the cyclically reduced word
    $W_{-}:=g' w h'^{-1} w^{-1}$. Since $\N{g\cdot h}=\N{W_{+}}$ and $\N{g\cdot h^{-1}}=\N{W_{-}}$, it follows from the Claim that $g$, $h$ satisfy (A3) and (A0).

    \medskip

    Case 3: $p=\min\{k,s\}$. Assume without loss of generality that $p=k$.
    
    \smallskip

    Subcase 3a: $p=s$. Then $u=v$ and
    $g\cdot h$, $g\cdot h^{-1}$ are conjugated to $g'\cdot h'$ and $g'\cdot h'^{-1}$, respectively. Since $g'$, $h'$ are cyclically reduced and $\N{g\cdot h}=\N{g'\cdot h'}$, $\N{g\cdot h^{-1}}=\N{g'\cdot h'^{-1}}$, the situation can be reduced to Case 1.

    \smallskip

    Before we split the remaining part of the proof further into subcases, assume that $p<s$.

    \smallskip

    Subcase 3b:  $x_1 \ne v_{p+1}$ and $x_n \ne v_{p+1}^{-1}$. Let $w:=u^{-1}\cdot v=v_{p+1}\dots v_s$. Then $W_{+}:=g'wh'w^{-1}$ and $W_{-}:=g'wh'^{-1}w^{-1}$ are both  cyclically reduced and conjugated to $g\cdot h$ and $g\cdot h^{-1}$, respectively. As in Case 2, the Claim yields (A3) and (A0) for $g,h$.

    \smallskip
    
    Subcase 3c: $x_1=v_{p+1}$.
    For convenience, let us extend the notation $x_i$ to $i>n$ treating the indices modulo $n$, that is, $x_{n+1}:=x_1$, etc. Define $q:=\max\{1\le i\le s-p \colon x_i = v_{p+i}\}$.

    If $q<s-p$, we can write
    \[g\cdot h=u x_1 \dots x_n x_{n+1}\dots x_{n+q} v_{p+q+1} \dots v_s y_1 \dots y_m v_{s}^{-1} \dots v_{p+q+1}^{-1} x_{q}^{-1}\dots x_{1}^{-1}u^{-1},\] which
    is conjugated to the cyclically reduced word
    $W_{+}:=g'' w h' w^{-1}$, where $g'':=x_{q+1}\dots x_{q+n}$ and $w:=v_{p+q+1}\dots v_s$.
    Similarly, $g\cdot h^{-1}$ is conjugated to the cyclically reduced word ${W}_{-}:=g'' w h'^{-1} w^{-1}$. Note that $g''$ is a cyclic shift of $g'$, so $g''\in\Scr$. Applying the Claim to $g''$, $h'$, $w$ and using the equalities $\N{g\cdot h}=\N{W_{+}}$, $\N{g\cdot h^{-1}}=\N{W_{-}}$, $\N{g}=\N{g'}=\N{g''}$, $\N{h}=\N{h'}$, we get (A3) and (A0) for $g$, $h$.

    If $q=s-p$, then $g\cdot h$, $g\cdot h^{-1}$ are conjugated to $g''\cdot h'$ and $g''\cdot h'^{-1}$, respectively. We return to Case 1 with $g''$ and $h'$ instead of $g$ and $h$.

    \smallskip

    Subcase 3d: $x_n=v_{p+1}^{-1}$. We claim that Subcase 3c applies to the pair of elements $g^{-1}=ug'^{-1}u^{-1}$, $h^{-1}=vh'^{-1}v^{-1}$. Indeed, $u$ and $v$ remain unchanged and the first letter of $g'^{-1}$ is $x_{n}^{-1}$, hence equal to $v_{p+1}$. Therefore, the conditions (A3) and (A0) hold for $g^{-1}$, $h^{-1}$, and hence (in view of (A1) and (A5)) for $g$, $h$ as well.
\end{proof}

We can now combine Theorem \ref{th:main} with Proposition \ref{prop:existence}, which leads to the following result on the existence and uniqueness of a pseudo-length on $F(a,b)$ satisfying certain conditions.

\begin{thm}\label{thm:unique}
    Assume that $\alpha,\beta,\gamma,\delta\in\Lambda$ satisfy the conditions \eqref{eq:2L}, \eqref{eq:either_or}, and \eqref{eq:good_pair_alfa_beta}. There exists exactly one pseudo-length $\N{\blank}\colon F(a,b) \to \Lambda_{+}$ such that $\N{a}=\alpha$, $\N{b}=\beta$, $\N{ab}=\gamma$, and $\N{ab^{-1}}=\delta$.
\end{thm}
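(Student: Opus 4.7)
The plan is to get both halves essentially for free from the two main technical results that precede the statement: existence from Proposition~\ref{prop:existence}, and uniqueness from Theorem~\ref{th:main}. The hypotheses \eqref{eq:2L}, \eqref{eq:either_or}, \eqref{eq:good_pair_alfa_beta} in the theorem are exactly the hypotheses of Proposition~\ref{prop:existence}, so no work is needed to invoke it.

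For existence, I would apply Proposition~\ref{prop:existence} to obtain a pseudo-length $\N{\blank}\colon F(a,b)\to\Lambda_{+}$ defined by \eqref{eq:w} with the function $f$ from \eqref{eq:cond_on_f}. I then check that it assigns the prescribed values on the four designated elements. The words $a$, $b$, $ab$, $ab^{-1}$ are already cyclically reduced, so the definition \eqref{eq:w} can be applied directly. For $n=1$ the empty sum plus $f(x_1,x_1)$ gives $\N{a}=\tfrac12 f(a,a)=\alpha$ and $\N{b}=\tfrac12 f(b,b)=\beta$; for $n=2$ the formula collapses to $\tfrac12\bigl(f(x_1,x_2)+f(x_2,x_1)\bigr)$, yielding $\N{ab}=f(a,b)=\gamma$ and $\N{ab^{-1}}=f(a,b^{-1})=\delta$ by the symmetry condition on $f$. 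This is a one-line verification.

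For uniqueness, suppose $\N{\blank}\colon F(a,b)\to\Lambda_{+}$ is any pseudo-length with the prescribed values. The condition \eqref{eq:good_pair_alfa_beta} then states precisely that $(a,b)$ is a ping-pong pair in the sense of the definition right before Theorem~\ref{th:main}. Therefore Theorem~\ref{th:main} applies and gives an explicit formula \eqref{eq:main} for $\N{w}$ in terms of $\N{xy}$ for $x,y\in\Sigma$, whenever $w$ is cyclically reduced. The quantities $\N{xy}$ with $x,y\in\Sigma$ are themselves determined by $\alpha,\beta,\gamma,\delta$ (using (A1), (A5) and \eqref{eq:g^n} to reduce every such pair to one of $\N{a^{\pm 2}}$, $\N{b^{\pm 2}}$, $\N{ab}$, $\N{ab^{-1}}$), so $\N{w}$ is uniquely pinned down for every cyclically reduced $w$. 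Any $g\in F(a,b)$ is conjugate to some such $w$, and the conjugation invariance axiom (A1) then fixes $\N{g}$.

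I do not expect any serious obstacle here, since both pieces of the argument have been done in the earlier results; the only care needed is the direct computation that the specific construction of Proposition~\ref{prop:existence} realises exactly the values $\alpha,\beta,\gamma,\delta$, but this is a routine unpacking of definitions.
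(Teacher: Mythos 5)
Your proposal is correct and follows essentially the same route as the paper: existence by a direct evaluation of the formula from Proposition~\ref{prop:existence} at $a$, $b$, $ab$, $ab^{-1}$, and uniqueness by observing that \eqref{eq:good_pair_alfa_beta} makes $(a,b)$ a ping-pong pair, so Theorem~\ref{th:main} together with (A1) determines the pseudo-length on all of $F(a,b)$ from its values on two-letter words.
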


\begin{proof}
    By Proposition \ref{prop:existence}, the formula \eqref{eq:w} defines a pseudo-length $\N{\blank}$ on $F(a,b)$ such that 
    \[
    \begin{array}{cc}
        \N{a}=\frac12 f(a,a)=\alpha,& \N{b}=\frac12 f(b,b)=\beta,\\
        \N{ab}=\frac12 \left(f(a,b)+f(b,a)\right) = \gamma,&
        \N{ab^{-1}}=\frac12 \left(f(a,b^{-1})+f(b^{-1},a)\right)=\delta.
    \end{array}
    \]
    
    Suppose that $\N{\blank}_{1}$, $\N{\blank}_2$ are two pseudo-lengths on $F(a,b)$ taking the values $\alpha$, $\beta$, $\gamma$, $\delta$ at $a$, $b$, $ab$, $ab^{-1}$, respectively. By elementary properties of any pseudo-length, $\N{\blank}_1$, $\N{\blank}_2$ must agree on all elements of $F(a,b)$ represented by two-letter words in $\Sr$. Hence, by Theorem \ref{th:main}, they agree on all elements represented by words in $\Scr$; so they are equal by (A1).
\end{proof}

\section{Applications}

From our results we draw the following corollary about properly discontinuous actions. It combines \cite[Lemmas 3.3.6 and 3.3.8]{Chiswell}.

\begin{cor}\label{cor:general}
    Let $G$ be a group acting by isometries on a $\Lambda$-tree $(X,d)$ with the translation length function $\N{\blank}\colon G\to \Lambda_{+}$.
    If $(a,b)\in G\times G$ is a ping-pong pair with respect to $\N{\blank}$, then the subgroup $\left< a, b\right>$ is free of rank two and acts freely, without inversions, and properly discontinuously on $(X,d)$. 

    Moreover, if $G$ is a topological group and $\N{\blank}$ is continuous at the identity, then $\left< a, b\right>$ is discrete with respect to the topology inherited from $G$.
\end{cor}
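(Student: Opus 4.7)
My plan is to reduce everything to a single uniform lower bound on translation lengths, extracted directly from Theorem \ref{th:main}.

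Set $M:=\min\{2\N{a},2\N{b},\N{ab},\N{ab^{-1}}\}$; this is positive by the ping-pong hypothesis. For any cyclically reduced word $w=x_1\dots x_n\in\Scr$ with $n\ge 1$, axioms (A1) and (A5) imply that each two-letter subword value $\N{x_i x_{i+1}}$ equals one of the four quantities defining $M$, so Theorem \ref{th:main} yields $2\N{w}\ge nM\ge M$. Since every nontrivial $g\in\langle a,b\rangle$ is conjugate to (the image of) some $w\in\Scr\setminus\{1\}$, axiom (A1) upgrades this to $2\N{g}\ge M$ for all $g\in\langle a,b\rangle\setminus\{1\}$. Two consequences are immediate: first, the canonical map $F(a,b)\to\langle a,b\rangle$ is injective (a nontrivial element of its kernel would be conjugate in $F(a,b)$ to a cyclically reduced word whose image in $\langle a,b\rangle$ has translation length $0$, contradicting the bound), so $\langle a,b\rangle$ is free of rank two; second, the translation length function on $\langle a,b\rangle$ is purely hyperbolic, which by the equivalence recalled in the preliminaries gives that the action is free and without inversions.

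To prove proper discontinuity, fix $x\in X$. The standard inequality $d(x,gx)\ge\N{g}$ combined with the bound gives $2d(x,gx)\ge M$ for every $g\in\langle a,b\rangle\setminus\{1\}$. I split on the structure of $\Lambda$: if $\Lambda$ has a smallest positive element $s_0$, then $B(x,s_0)=\{x\}$ is already open in $X$, and freeness makes $U:=\{x\}$ a valid witness. Otherwise, a short induction (successively picking positive $\mu_1,\dots,\mu_4\in\Lambda$ with $\mu_1+\dots+\mu_4<M$, possible because at each stage $M-\sum_i\mu_i>0$ admits a strictly smaller positive element of $\Lambda$) produces $r:=\min_i\mu_i>0$ with $4r<M$; then $2d(x,gx)\ge M>4r$ yields $d(x,gx)\ge 2r$, and so $U:=B(x,r)$ satisfies $gU\cap U=\emptyset$ for every $g\ne 1$.

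For discreteness, assume $G$ is a topological group and $\N{\blank}$ is continuous at $1$. Since the doubling map $\lambda\mapsto 2\lambda$ on $\Lambda$ is continuous, the set $V:=\{\lambda\in\Lambda\colon 2\lvert\lambda\rvert<M\}$ is an open neighborhood of $0$ in $\Lambda$, and it is disjoint from $\N{\langle a,b\rangle\setminus\{1\}}$ by the bound $2\N{g}\ge M$. Continuity of $\N{\blank}$ at $1$ provides an open neighborhood $U$ of $1$ in $G$ with $\N{U}\subseteq V$, and this forces $U\cap\langle a,b\rangle=\{1\}$. The main technical obstacle lies in the proper discontinuity step when $\Lambda$ is not $2$-divisible (so that ``$M/2$'' may fail to exist in $\Lambda$); the case split on whether $\Lambda$ has a smallest positive element resolves this, because in the discrete-$\Lambda$ case the metric topology of $X$ itself collapses to the discrete topology and proper discontinuity follows from freeness alone.
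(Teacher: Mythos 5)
Your proof is correct and follows essentially the same route as the paper's: a uniform positive lower bound on translation lengths over $\langle a,b\rangle\setminus\{1\}$ extracted from Theorem \ref{th:main}, the case split on whether $\Lambda$ has a least positive element for proper discontinuity, and continuity of $\N{\blank}$ at $1$ for discreteness. The only point to tighten is in the proper-discontinuity step: the displacement bound must be applied at an arbitrary $y\in B(x,r)$, i.e.\ $d(y,gy)\ge\N{g}>2r$, so that $y,gy\in B(x,r)$ would force $d(y,gy)<2r$, a contradiction -- the bound at the single point $x$ does not by itself give $gU\cap U=\emptyset$.
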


\begin{proof}
    It follows from \eqref{eq:main} that $\left< a, b\right>$ is free of rank two and $\N{\blank}$ restricted to $\left< a, b\right>$ is purely hyperbolic. Hence, the action of $\left< a, b\right>$ on $(X,d)$ is free and without inversions.

    Let ${C:=\min\big\{\N{a},\N{b},\N{ab},\N{ab^{-1}}\}>0}$. By \eqref{eq:syllables}, \eqref{eq:lem_first} and (A1), we have
    $\N{g}\ge C$ for all $g\in \left< a, b\right>\setminus\{1\}$.
    We consider two cases. If there exists $\min\{\lambda\in\Lambda\colon \lambda>0\}$, the topology of $(X,d)$ is discrete and the free action of $\left<a,b\right>$ on $X$ is clearly properly discontinuous. Otherwise, there exists $r>0$ such that $2r<C$. Suppose that $x\in X$,
    $y\in B(x,r)$, $g\in \left<a,b\right>\setminus\{1\}$ and $d(gy,x)<r$. Then $\N{g}\le d(gy,y)<2r<C$, a contradiction. Hence, the neighborhood $U:=B(x,r)$ of $x$ satisfies $gU\cap U=\emptyset$ for $g\in\left<a,b\right>\setminus\{1\}$; so the action of $\left<a,b\right>$ is properly discontinuous.

    Assume that $\N{\blank}\colon G \to \Lambda_{+}$ is continuous at $1\in G$.
    As we have shown,
    $\{1\}=\{g\in \left<a, b\right> \colon \N{g}<C\}$. The latter set is a neighborhood of $1$ in $\left<a, b\right>$. Therefore, $\{1\}$ is open in $\left<a, b\right>$ and the subgroup $\left<a, b\right>$ is discrete.
\end{proof}

Our next corollary generalizes the result \cite[Corollary 3.6]{Conder}, which was formulated for a continuous action on a $\mathbb{Z}$-tree. It was proved there by geometrical arguments and using a version of the ping-pong lemma \cite[Lemma 3.3]{Conder}. A natural situation where the corollary can be used is $G$ being the group of all isometries of a $\Lambda$-tree $(X,d)$ with the topology of pointwise convergence. It follows from \cite[Ch. X, \S3.5, p. 30]{Bourbaki} that this topology makes $G$ a topological group. We also notice that, for any $\Lambda$-metric space $(X,d)$, $d$ is continuous as a function $d\colon X\times X \to \Lambda$.

\begin{cor}\label{cor:Conder}
   Assume that $G$ is a topological group acting on a $\Lambda$-tree $(X,d)$ by isometries in such a way that for some $x_0\in X$ the map ${G\ni g\mapsto gx_0 \in X}$ is continuous. Let $\N{\blank}\colon G\to \Lambda_{+}$ be the translation length function of this action. If $(a,b)\in G\times G$ is a ping-pong pair with respect to $\N{\blank}$, then the subgroup $\left<a, b\right>$ is free of rank two and discrete.
\end{cor}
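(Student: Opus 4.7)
The plan is to reduce the statement directly to the second part of Corollary \ref{cor:general}, whose ``free of rank two'' conclusion is already immediate from the ping-pong hypothesis. All the ``new'' work lies in verifying the continuity hypothesis of that corollary, namely that the translation length function $\N{\blank}\colon G\to \Lambda_{+}$ is continuous at $1\in G$.

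For this, the key observation is the one-line inequality
\[
\N{g}\le d(x_0, g x_0)\quad\text{for every }g\in G,
\]
which follows straight from the definition \eqref{eq:t_l}: if $g$ is not an inversion, then $\N{g}$ is the minimum of $d(x,gx)$ over $x\in X$, and if $g$ is an inversion, then $\N{g}=0\le d(x_0,gx_0)$. Since the orbit map $G\ni g\mapsto g x_0\in X$ is assumed continuous at $1$, and the $\Lambda$-metric $d\colon X\times X\to \Lambda$ is continuous (as noted in the paragraph preceding the corollary), the composite map $G\ni g\mapsto d(x_0, g x_0)\in \Lambda$ is continuous at $1$ with value $d(x_0,x_0)=0$.

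Combining these two facts, the function $\N{\blank}$ is squeezed between $0$ and a function continuous at $1$ with value $0$, hence $\N{\blank}$ itself is continuous at $1$: given any neighborhood $V$ of $0$ in $\Lambda_{+}$, we can pick $\varepsilon\in \Lambda_{+}$ with $[0,\varepsilon)\subseteq V$, then use continuity of the orbit map to find a neighborhood $U$ of $1$ in $G$ with $d(x_0, g x_0)<\varepsilon$ for $g\in U$, which forces $\N{g}<\varepsilon$ on $U$.

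With continuity of $\N{\blank}$ at $1$ established, the conclusion follows immediately from Corollary \ref{cor:general}: the ping-pong pair $(a,b)$ generates a free subgroup of rank two that is discrete in $G$. I do not foresee any real obstacle here; the substance of the result is packed into Corollary \ref{cor:general}, and this corollary merely records the natural geometric situation (continuous orbit maps) in which its continuity hypothesis is automatic.
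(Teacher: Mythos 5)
Your proposal is correct and follows exactly the paper's own argument: bound $\N{g}\le d(x_0,gx_0)$ via the definition of translation length, deduce continuity of $\N{\blank}$ at $1$ from continuity of the orbit map, and invoke Corollary \ref{cor:general}. No issues.
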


\begin{proof}
    By the assumption $d(x_0,gx_0)\to 0$ as $g\to 1$.
    It follows from \eqref{eq:t_l} that $0\le \N{g}\le d(x_0,gx_0)$ for all $g\in G$. Hence, $\N{g}\to 0=\N{1}$ as $g\to 1$; so the function $\N{\blank}$ is continuous at $1\in G$ and we can apply Corollary \ref{cor:general}.
\end{proof}

Now, we will consider purely hyperbolic pseudo-lengths on the free group $F(a,b)$ in the case when $\Lambda$ is an Archimedean totally ordered abelian group. Without loss of generality, we can assume that $\Lambda$ is a subgroup of the additive group $\mathbb{R}$.

We will show that it is always possible to find a basis of $F(a,b)$ that is a ping-pong pair. To this end, we will use an algorithm that performs Nielsen transformations (for the terminology, see \cite[p. 5]{Lyndon-Schupp}) on the pair $(a,b)$ until a ping-pong pair is obtained. The idea is not new, a similar procedure is contained in \cite[\S4]{Culler-Vogtmann} and called the {\em division process}. It is also presented in the proof of \cite[Theorem 1]{Chiswell-article}, where the terminology and notation are of a geometric nature. The proof of termination of the algorithm relies on the completeness property of $\mathbb{R}$. 

For the reader's convenience, we will present the algorithm in detail and prove its correctness, using only the properties of pseudo-lengths.

\begin{algorithm}[H]
    \caption{}
\begin{algorithmic}[1]
    \Require{a purely hyperbolic pseudo-length $\N{\blank}\colon F(a,b)\to \mathbb{R}_{+}$}
    \Ensure{a ping-pong pair generating $F(a,b)$}
    \State $(g,h):= (a,b)$
    \If{$\N{g}<\N{h}$}
        \State $(g,h):=(h,g)$
    \EndIf
    \If{$\N{gh}<\N{gh^{-1}}$}
        \State $(g,h):=(g,h^{-1})$
    \EndIf
    \While{$\N{g}-\N{h}=\N{gh^{-1}}$}
        \State $(g,h):=(gh^{-1},h)$
        \If{$\N{g}<\N{h}$}
        \State $(g,h):=(h,g)$
        \EndIf
        \If{$\N{gh}<\N{gh^{-1}}$}
        \State $(g,h):=(g,h^{-1})$
        \EndIf
    \EndWhile
    \If{$\N{g}-\N{h}<\N{gh^{-1}}$}
        \State \Return $(g,h)$
        \Else
        \State \Return $(gh^{-1},h)$
    \EndIf
\end{algorithmic}
\end{algorithm}

\begin{lem}\label{lem:algorithm}
    Let $\N{\blank}\colon G\to \Lambda_{+}$ be a pseudo-length on a group $G$. Assume that $g,h\in G$ are such that $\N{g}\ge \N{h}>0$ and $\N{gh}\ge \N{gh^{-1}}>0$.
    \begin{enumerate}
        \item If $\N{g}-\N{h}=\N{gh^{-1}}$, then $\N{[g,h]}\le 2\N{g}$.
        \item If $\N{g}-\N{h}>\N{gh^{-1}}$, then $(gh^{-1},h)$ is a ping-pong pair.
    \end{enumerate}
\end{lem}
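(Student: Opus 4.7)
The plan is to derive both parts by successive applications of axioms (A2) and (A3) together with conjugation invariance (A1). A preliminary observation that I would use in both parts is that, under the standing hypotheses, one has $\N{gh}=\N{g}+\N{h}$. Indeed, applying (A3) to the pair $(g,h)$ (legitimate because $\N{g}\ge\N{h}>0$) produces either $\N{gh}=\N{gh^{-1}}>\N{g}+\N{h}$ or $\max\{\N{gh},\N{gh^{-1}}\}=\N{g}+\N{h}$. The first alternative is ruled out by $\N{gh^{-1}}\le \N{g}-\N{h}<\N{g}+\N{h}$; the second, combined with $\N{gh}\ge\N{gh^{-1}}$, gives the claimed equality.

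For part (ii), the key step is a second application of (A3), this time to the pair $(gh^{-1},h)$, which is legal because both norms are positive (note $\N{g}>\N{h}$ follows from $\N{gh^{-1}}>0$ and the hypothesis $\N{g}\ge\N{h}$). The two relevant products are $(gh^{-1})h=g$ and $(gh^{-1})h^{-1}=gh^{-2}$. The alternative ``$\max\{\N{g},\N{gh^{-2}}\}=\N{gh^{-1}}+\N{h}$'' would force $\N{g}\le\N{gh^{-1}}+\N{h}$, contradicting the hypothesis $\N{gh^{-1}}<\N{g}-\N{h}$. Hence the other alternative must hold: $\N{g}=\N{gh^{-2}}>\N{gh^{-1}}+\N{h}$. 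Since $\N{gh^{-1}}+\N{h}>\lvert\N{gh^{-1}}-\N{h}\rvert$, this immediately yields the ping-pong inequality $\lvert\N{gh^{-1}}-\N{h}\rvert<\min\{\N{g},\N{gh^{-2}}\}$, so $(gh^{-1},h)$ is a ping-pong pair.

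For part (i), I would exploit the factorization $[g,h]=g\cdot(hgh^{-1})^{-1}$ together with (A1), which gives $\N{hgh^{-1}}=\N{g}$. Applying (A2) to the pair $\bigl(g,\,(hgh^{-1})^{-1}\bigr)$ leaves two possibilities: either $\max\{\N{[g,h]},\N{g\cdot hgh^{-1}}\}\le 2\N{g}$, which already gives the conclusion, or $\N{[g,h]}=\N{ghgh^{-1}}$. In the latter case I would rewrite $ghgh^{-1}=(gh)(gh^{-1})$ and apply (A2) once more, to the pair $(gh,gh^{-1})$; here $(gh)(gh^{-1})^{-1}=gh^{2}g^{-1}$, which by (A1) and \eqref{eq:g^n} has norm $2\N{h}$, while the preliminary observation yields $\N{gh}+\N{gh^{-1}}=(\N{g}+\N{h})+(\N{g}-\N{h})=2\N{g}$. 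One branch of (A2) bounds $\N{ghgh^{-1}}$ by $\N{gh}+\N{gh^{-1}}=2\N{g}$; the other forces $\N{ghgh^{-1}}=2\N{h}\le 2\N{g}$. Either way, $\N{[g,h]}\le 2\N{g}$.

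The main subtlety I expect is the bookkeeping in part (i): the ``equality alternative'' of (A2) must be iterated once before the telescoping $\N{gh}+\N{gh^{-1}}=2\N{g}$ becomes available, and one has to remember that (A2), unlike (A3), is unconditional and can be used freely even for products that might have zero norm. Everything else is a direct unwinding of the axioms.
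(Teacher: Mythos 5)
Your proof is correct. Part (ii) and the preliminary computation $\N{gh}=\N{g}+\N{h}$ coincide with the paper's argument (the paper applies (A3) to the pair $(gh^{-1},h)$ in exactly the same way to rule out the ``$\max$'' alternative). In part (i) you take a genuinely different, though parallel, route: the paper applies (A3) to the pair $(gh,g^{-1}h)$ --- whose two products are $ghg^{-1}h$ and $g^{2}$ --- to obtain $\N{ghg^{-1}h}\le 2\N{g}$, and then (A2) to $(ghg^{-1},h)$ to transfer this bound to $[g,h]$; you instead relate $\N{[g,h]}$ to $\N{ghgh^{-1}}$ via (A2) applied to $(g,hg^{-1}h^{-1})$, and then bound $\N{ghgh^{-1}}$ by applying (A2) to $(gh,gh^{-1})$, using $\N{gh}+\N{gh^{-1}}=2\N{g}$ and $\N{gh^{2}g^{-1}}=2\N{h}$. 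Both chains consist of two applications of the dichotomy axioms resting on the same telescoping identity; yours has the small advantage that, after the preliminary step, only the unconditional axiom (A2) is invoked, so no further hyperbolicity checks are needed, whereas the paper must first note that $\N{g^{-1}h}=\N{gh^{-1}}>0$ before using (A3). Two cosmetic remarks: the identity $\N{gh}=\N{g}+\N{h}$ follows not from the standing hypotheses alone but from the common case hypothesis $\N{gh^{-1}}\le\N{g}-\N{h}$, as your own justification in fact shows; and this identity is not actually used in your part (ii).
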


\begin{proof}
    Assume that $\N{g}-\N{h}=\N{gh^{-1}}$. It follows from (A3) that $\max\{\N{gh},\N{gh^{-1}}\}=\N{g}+\N{h}$, so $\N{gh}=\N{g}+\N{h}$. Let us apply (A3) to the pair $(gh,g^{-1}h)$ of hyperbolic elements of $G$. If the first alternative in (A3) holds, then
    \[\N{ghg^{-1}h}=\N{g^2}=2\N{g}>\N{gh}+\N{g^{-1}h}=2\N{g},\]
    a contradiction. Thus, $\N{ghg^{-1}h}\le \max\{\N{ghg^{-1}h}, \N{g^2}\}=2\N{g}$. Let us apply now (A2) to the pair $(ghg^{-1},h)$. We obtain $\N{ghg^{-1}h}=\N{ghg^{-1}h^{-1}}$ or
    \[\max\{\N{ghg^{-1}h},\N{ghg^{-1}h^{-1}}\}\le \N{ghg^{-1}}+\N{h}=2\N{h}\le 2\N{g}.\]
    In either case, $\N{[g,h]}=\N{ghg^{-1}h^{-1}}\le 2\N{g}$.

    Assume that $\N{g}-\N{h}>\N{gh^{-1}}$. It follows from (A3) applied to the pair $(gh^{-1},h)$ that either
    $\N{g}=\N{gh^{-2}}>\N{gh^{-1}}+\N{h}$ or
    $\max\{\N{g},\N{gh^{-2}}\}=\N{gh^{-1}}+\N{h}$. The latter equality would lead to a contradiction. Hence, we have
    $\min\{\N{(gh^{-1})h},\N{(gh^{-1})h^{-1}}\}=\N{g}>\lvert \N{gh^{-1}}-\N{h}\rvert$, so $(gh^{-1},h)$ is a ping-pong pair.
\end{proof}

\begin{prop}\label{prop:termination}
    Algorithm 1 terminates after a finite number of steps and returns the correct output.
\end{prop}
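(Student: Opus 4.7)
The plan is to separate the proposition into two claims --- correctness of the output (assuming termination) and termination of the loop --- and handle them in that order. Correctness will follow directly from loop invariants and from Lemma \ref{lem:algorithm}(ii), while termination is the main obstacle and will rely on completeness of $\mathbb{R}$ together with Lemma \ref{lem:algorithm}(i).

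For correctness, I would first note that every operation performed on $(g,h)$ in the algorithm (swap, inversion of $h$, and the replacement $(g,h)\mapsto(gh^{-1},h)$) is a Nielsen transformation, so $(g,h)$ remains a basis of $F(a,b)$ throughout. The normalization steps before and inside the loop guarantee the invariants $\N{g}\ge\N{h}>0$ and $\N{gh}\ge\N{gh^{-1}}>0$ at each evaluation of the loop condition. On exit, either $\N{g}-\N{h}<\N{gh^{-1}}$, so $\lvert\N{g}-\N{h}\rvert<\min\{\N{gh},\N{gh^{-1}}\}$ and $(g,h)$ is already a ping-pong pair; or $\N{g}-\N{h}>\N{gh^{-1}}$, and Lemma \ref{lem:algorithm}(ii) gives that $(gh^{-1},h)$ is a ping-pong pair. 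In both cases the returned pair is as promised.

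For termination, I would extract two invariants of the loop. First, a direct calculation shows that each Nielsen transformation in the algorithm sends $[g,h]$ to an element that is conjugate to $[g,h]^{\pm 1}$ (for instance, $[gh^{-1},h]=[g,h]$ while $[h,g]=[g,h]^{-1}$), so $\N{[g_n,h_n]}=\N{[a,b]}$ throughout; together with Lemma \ref{lem:algorithm}(i) this produces the key lower bound $\N{g_n}\ge\N{[a,b]}/2$ at every iteration at which the loop body is entered. Second, with $s_n:=\N{g_n}+\N{h_n}$, a case split on whether $\N{g_n h_n^{-1}}\ge\N{h_n}$ (no swap) or $\N{g_n h_n^{-1}}<\N{h_n}$ (swap) shows that $s_{n+1}=\N{g_n}$ in either case. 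Hence $s_n$ is strictly decreasing with positive decrement $\N{h_n}$, and bounded below by $\N{[a,b]}/2$.

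By completeness of $\mathbb{R}$, the sequence $s_n$ converges to some $L\ge\N{[a,b]}/2>0$, whence $\N{h_n}=s_n-s_{n+1}\to 0$. The main obstacle will be to turn this convergence into a contradiction. The crucial observation is that when iteration $n$ is a swap case ($\N{g_n}<2\N{h_n}$) and the loop continues at step $n+1$, one has $\N{g_{n+1}}=\N{h_n}$, so applying the lower bound at step $n+1$ forces $\N{h_n}\ge\N{[a,b]}/2$. Assuming an infinite loop, either infinitely many iterations are swap cases, yielding $\N{h_n}\ge\N{[a,b]}/2$ infinitely often and contradicting $\N{h_n}\to 0$; or from some index $n_0$ onward every iteration is a no-swap case, so $\N{h_n}$ stabilizes to some $c>0$ and $\N{g_n}$ decreases by exactly $c$ at each step --- but a no-swap iteration requires $\N{g_n}\ge 2c$, so after at most $\lfloor\N{g_{n_0}}/c\rfloor$ further steps $\N{g_n}$ drops below $2c$, forcing either a swap case (contradicting the choice of $n_0$) or loop exit (contradicting the infinite loop assumption). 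In every scenario we reach a contradiction, so the loop terminates.
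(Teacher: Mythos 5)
Your proposal is correct and follows essentially the same route as the paper: correctness via Lemma \ref{lem:algorithm}\,(ii), and termination by contradiction using the invariance of $\N{[g,h]}=\N{[a,b]}$, a strictly decreasing potential (your $s_n=\N{g_n}+\N{h_n}$ is just an affine rescaling of the paper's $\Delta_n=2\N{g_n}+2\N{h_n}-\N{[g_n,h_n]}$) whose decrement is $\N{h_n}$ and whose lower bound comes from Lemma \ref{lem:algorithm}\,(i), forcing $\N{h_n}\to 0$ and then a contradiction via a dichotomy on the swap behavior. The only cosmetic difference is that you split on infinitely many versus finitely many swap iterations, whereas the paper splits on $\lim\N{g_n}>0$ versus $\lim\N{g_n}=0$; both close the argument the same way.
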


\begin{proof}
    First, observe that each of the assignments in the lines 3, 5, 7, 9, 11 of Algorithm 1 is a Nielsen transformation, so $(g,h)$ is always a basis of $F(a,b)$. Since $\N{\blank}$ is purely hyperbolic, we thus have $\N{g}>0$, $\N{h}>0$.
    Moreover, it can be routinely checked that $\N{[g,h]}=\N{[h,g]}=\N{[g,h^{-1}]}=\N{[gh^{-1},h]}$, so $\N{[g,h]}=\N{[a,b]}>0$ remains constant during the execution of the algorithm.

    Suppose that the algorithm does not terminate. This means that, during its execution, we never exit the {\bf while} loop in the lines 6--11. Let us define a sequence $(g_n,h_n)_{n\ge 0}\subseteq G\times G$ as follows.
    Let $(g_0,h_0)$ denote the values of the variables $g$ and $h$ when entering the {\bf while} loop for the first time.
    For $n\ge 1$, let $(g_n,h_n)$ denote the values of $g$ and $h$ at the end of the $n$-th execution of the loop.
    The instructions in the lines 2--5 and 8--11 normalize $(g,h)$ so that
    \[\N{g_n}\ge \N{h_n}>0 \quad \text{and} \quad \N{g_nh_n}\ge \N{g_n h_{n}^{-1}}>0 \quad \text{for all } n\ge 0.\]
    Let us define a sequence of real numbers $\Delta_n:= 2\N{g_n} + 2\N{h_n} - \N{[g_n,h_n]}$ for $n\ge 0$. It follows from Lemma \ref{lem:algorithm} (i) that $\Delta_n>0$ for all $n\ge 0$.
    The assignment in the line 7 diminishes $\Delta_n$ by $2\N{h_n}$ and those from the lines 9, 11 do not change it, so \begin{equation}\label{eq:Delta}
    \Delta_n = \Delta_{n-1} - 2\N{h_n} < \Delta_{n-1} \quad \text{for all } n\ge 1.
    \end{equation}
    Similarly, we can see that $(\N{g_n})_{n\ge 0}$, $(\N{h_n})_{n\ge 0}$ are nonincreasing sequences of positive real numbers.
    Let $\bar{g}:=\lim_{n\to\infty} \N{g_n}$, $\bar{h}:=\lim_{n\to\infty} \N{h_n}$ and $\bar{\Delta}:=\lim_{n\to\infty} \Delta_{n}$. We have $0\le \bar{h}\le \bar{g}$, $\bar{\Delta} \ge 0$, and by \eqref{eq:Delta} we obtain $\bar{h}=0$.

    Suppose that $\bar{g}>0$, then there exists $n_0$ such that $2\N{h_n}<\bar{g}\le \N{g_n}$ for $n\ge n_0$. It follows that $\N{g_n h_{n}^{-1}}=\N{g_n}-\N{h_n}>\N{h_n}$ for $n\ge n_0$. Hence, the line 9 is no longer executed starting with the $(n_0+1)$-th iteration of the {\bf while} loop, so $\N{h_{n+1}}=\N{h_n}$ for $n\ge n_0$.
    Therefore, $\N{h_n}$ is constant for $n\ge n_0$ and $\bar{h}=\N{h_{n_0}}>0$, a contradiction.
    Thus, $\bar{g}=\bar{h}=0$ and we obtain $\bar{\Delta}=2\bar{g}+2\bar{h}-\lim_{n\to \infty }\N{[g_n,h_n]} = -\N{[a,b]}<0$, which is another contradiction.
    
    We have shown that the algorithm must exit the {\bf while} loop. Hence, we eventually have $\N{g}-\N{h}\ne \N{gh^{-1}}$. If the condition in the line 12 is true, $(g,h)$ is clearly a ping-pong pair. Otherwise, $(gh^{-1},h)$ is a ping-pong pair by Lemma \ref{lem:algorithm} (ii).
\end{proof}

\begin{rem}
 Conder presented a related algorithm \cite[Algorithm 5.2]{Conder}, which determines whether or not the group generated by two isometries $a$, $b$ of a locally finite $\mathbb{Z}$-tree $X$ is both free of rank two and discrete (with respect to the pointwise convergence topology). In detail, the algorithm performs Nielsen transformations on the pair $(a,b)$ until either a ping-pong pair is obtained or one of the isometries is elliptic. The proof of termination of the algorithm \cite[Theorem 4.2]{Conder} relies on the well-ordering of the positive integers.
\end{rem}

Notice that, if $\N{\blank}\colon G\to \Lambda_{+}$ is a purely hyperbolic pseudo-length on $G$ and $\sigma$ is an automorphism of $G$, then the mapping $G\ni g \mapsto \N{\sigma(g)}\in \Lambda_{+}$ is also a purely hyperbolic pseudo-length. Thus we have a right action of the group $\Aut{(G)}$ of automorphisms of $G$ on the set $\PPsi{(G)}$ of all purely hyperbolic pseudo-lengths on $G$. The axiom (A1) guarantees that the group $\Inn{(G)}$ of inner automorphism of $G$ acts trivially on $\PPsi{(G)}$; so there is an action of $\Out{(G)}:=\Aut{(G)}/\Inn{(G)}$, the group of outer automorphisms of $G$, on $\PPsi{(G)}$.

Culler and Vogtmann \cite{CV1986} introduced an important space on which $\Out{(F_n)}$ acts ($F_n$ is free of rank $n$); it is called the {\em outer space} and can be defined as a space of minimal free actions of $F_n$ on simplicial $\mathbb{R}$-trees. Two actions are identified if their translation length functions differ only by a scalar factor. The outer space is then topologized as a subspace of the projective space $\mathbb{P}^{\mathcal{C}}:=(\mathbb{R}^{\mathcal{C}}\setminus\{0\})/\mathbb{R}^{\times}$, where $\mathcal{C}$ denotes the set of conjugacy classes in $F_n$. Since any purely hyperbolic pseudo-length on $F_2$ is the translation length function of a minimal free action on a simplicial $\mathbb{R}$-tree \cite[Theorems 3.4.2\,(c) and 5.2.6]{Chiswell}, the outer space in rank two can be thought of as the projectivization of $\PPsi{(F_2)}$. In \cite[\S6]{Culler-Vogtmann} a finite-dimensional embedding of this space (and its closure) was constructed.

We will find a subset $Y'\subseteq \PPsi{(F_2)}$ that has nonempty intersection with every $\Aut{(F_2)}$-orbit in $\PPsi{(F_2)}$. This set, after projectivization, can be embedded in $\mathbb{R}^3$. Moreover, we believe that $Y'$ contains exactly one point of each of the orbits. If this be true, we would have a nice description of the $\Aut{(F_2)}$-orbits in the Culler--Vogtmann outer space of $F_2$. 

\bigskip

Assume that $\Lambda$ is a nontrivial subgroup of $\mathbb{R}$ and $\alpha,\beta,\gamma,\delta\in\Lambda$ satisfy the conditions \eqref{eq:2L}, \eqref{eq:either_or} and \eqref{eq:good_pair_alfa_beta}. We will denote by $\N{\blank}_{\alpha,\beta,\gamma,\delta}$ the unique pseudo-length on $F(a,b)$ as in Theorem \ref{thm:unique}.

\begin{thm}\label{thm:description_of_orbits}
    Let $\{0\}\ne \Lambda \le \mathbb{R}$ and $\N{\blank}\colon F(a,b)\to \Lambda_{+}$ be a purely hyperbolic pseudo-length. There exists an automorphism $\sigma$ of $F(a,b)$ and $\alpha,\beta,\gamma,\delta\in\Lambda$ satisfying \eqref{eq:2L}, \eqref{eq:either_or}, and \eqref{eq:good_pair_alfa_beta}, such that
    \[\N{w}=\N{\sigma(w)}_{\alpha,\beta,\gamma,\delta} \quad \text{for all } w\in F(a,b).\]
\end{thm}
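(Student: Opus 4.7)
The plan is to exhibit the automorphism via Algorithm 1 and conclude via Theorem \ref{thm:unique}. First I would apply Algorithm 1 to $\N{\blank}$; by Proposition \ref{prop:termination} it terminates and returns a ping-pong pair $(g,h)$. Since every step of the algorithm is a Nielsen transformation starting from $(a,b)$, the output $(g,h)$ is a free basis of $F(a,b)$, so there is a unique automorphism $\sigma\in\Aut{(F(a,b))}$ with $\sigma(g)=a$ and $\sigma(h)=b$. Set $\alpha:=\N{g}$, $\beta:=\N{h}$, $\gamma:=\N{gh}$, $\delta:=\N{gh^{-1}}$. Then \eqref{eq:good_pair_alfa_beta} is the very definition of a ping-pong pair and \eqref{eq:either_or} is axiom (A3) applied to $(g,h)$. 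Once \eqref{eq:2L} is also verified, the pullback $w\mapsto \N{\sigma^{-1}(w)}$ will be a pseudo-length on $F(a,b)$ taking values $\alpha,\beta,\gamma,\delta$ at $a,b,ab,ab^{-1}$; by the uniqueness in Theorem \ref{thm:unique} it coincides with $\N{\blank}_{\alpha,\beta,\gamma,\delta}$, and substituting $w\leftarrow \sigma(w)$ yields $\N{w}=\N{\sigma(w)}_{\alpha,\beta,\gamma,\delta}$.

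The main obstacle is verifying \eqref{eq:2L}: axiom (A0) applied directly to $(g,h)$ gives $\max\{0,\gamma-\alpha-\beta\}\in 2\Lambda$ and $\max\{0,\delta-\alpha-\beta\}\in 2\Lambda$, which is inconclusive when, say, $\gamma=\alpha+\beta>\delta$, because $\delta-\alpha-\beta$ is then negative. I would work around this by applying (A0) to a different pair, whose analysis uses Theorem \ref{th:main}. Specifically, Theorem \ref{th:main} applied to the cyclically reduced word $ghgh^{-1}$ (with $(g,h)$ playing the role of the ping-pong pair) gives
\[2\N{ghgh^{-1}}=\N{gh}+\N{hg}+\N{gh^{-1}}+\N{h^{-1}g}=2\gamma+2\delta.\]
A short case analysis using $\min\{\gamma,\delta\}>|\alpha-\beta|$ together with the dichotomy \eqref{eq:either_or} shows $\gamma+\delta>2\max\{\alpha,\beta\}$, whence $\N{ghgh^{-1}}=\gamma+\delta>2\alpha=\N{g}+\N{hgh^{-1}}$ (using $\N{hgh^{-1}}=\alpha$ from (A1)). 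Axiom (A0) applied to the hyperbolic pair $(g,hgh^{-1})$ then yields
\[(\gamma-\alpha-\beta)+(\delta-\alpha-\beta)=\gamma+\delta-2\alpha-2\beta\in 2\Lambda.\]

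In each case of the dichotomy \eqref{eq:either_or}, at least one of the two summands $\gamma-\alpha-\beta$, $\delta-\alpha-\beta$ already lies in $2\Lambda$: if $\gamma=\delta>\alpha+\beta$ they are equal and both belong to $2\Lambda$ by (A0) applied to $(g,h)$, whereas if $\max\{\gamma,\delta\}=\alpha+\beta$ at least one of them vanishes outright. Since the sum lies in $2\Lambda$ by the previous step, the remaining summand does as well, establishing \eqref{eq:2L} and completing the argument.
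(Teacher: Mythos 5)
Your proposal is correct and follows essentially the same route as the paper: run Algorithm 1 to get a ping-pong pair, read off $\alpha,\beta,\gamma,\delta$, verify \eqref{eq:2L} by computing the length of a commutator-type word via Theorem \ref{th:main} and feeding a suitable factorization into (A0), then invoke the uniqueness of Theorem \ref{thm:unique}. The only (immaterial) difference is that the paper applies (A0) to the pair $(ghg^{-1},h)$ with the word $ghg^{-1}h$, obtaining $\delta+\alpha-\beta\in 2\Lambda$ directly, whereas you use $(g,hgh^{-1})$ with $ghgh^{-1}$ and then subtract off the summand already known to lie in $2\Lambda$; both computations and the positivity checks go through.
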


\begin{proof}
    Let us execute Algorithm 1 and denote by $(g,h)$ the ping-pong pair obtained as the output. Define $\alpha:=\N{g}$, $\beta:=\N{h}$, $\gamma:=\N{gh}$, $\delta:=\N{gh^{-1}}$.
    The condition \eqref{eq:either_or} follows from (A3), and \eqref{eq:good_pair_alfa_beta} is true since $(g,h)$ is a ping-pong pair. If $\gamma=\delta > \alpha+\beta$, \eqref{eq:2L} is a consequence of (A0). Assume without loss of generality that $\gamma=\max\{\gamma,\delta\}=\alpha+\beta$. We will show that $\delta - \alpha - \beta \in 2\Lambda$. By \eqref{eq:main} we have
    $\N{ghg^{-1}h}=\N{gh}+\N{gh^{-1}}=\gamma + \delta$.
    Hence,
    \[\N{ghg^{-1}h}-\N{ghg^{-1}}-\N{h}=\gamma + \delta - 2\beta = \delta + \alpha - \beta>0\]
    It now follows from (A0) that $\delta + \alpha - \beta \in 2\Lambda$, so $\delta - \alpha - \beta\in 2\Lambda$ as well.

    Let $\sigma$ be the automorphism of $F(a,b)$ sending $g$ to $a$ and $h$ to $b$. Clearly, the function $F(a,b)\ni w\mapsto \N{w}_{1}:=\N{\sigma^{-1}(w)}\in \Lambda_{+}$ is a pseudo-length on $F(a,b)$, and
    $\N{a}_1=\alpha$, $\N{b}_1=\beta$, $\N{ab}_1=\gamma$, $\N{ab^{-1}}_1=\delta$. We deduce from Theorem \ref{thm:unique} that $\N{\blank}_1=\N{\blank}_{\alpha,\beta,\gamma,\delta}$, so
    $\N{w}=\N{\sigma(w)}_1=\N{\sigma(w)}_{\alpha,\beta,\gamma,\delta}$ for all $w\in F(a,b)$.
\end{proof}

\begin{rem}\label{rem:final_rem}
    If $\Lambda=\mathbb{R}$, Theorem \ref{thm:description_of_orbits} can be expressed as follows:

    Every $\Aut{(F_2)}$-orbit in $\PPsi{(F_2)}$ contains an element of the set
    \[
    Y:=\{\N{\blank}_{\alpha,\beta,\gamma,\delta}: \alpha,\beta,\gamma,\delta\in\mathbb{R} \; \text{satisfy \eqref{eq:either_or} and \eqref{eq:good_pair_alfa_beta}}\}.
    \]

    We can find a smaller set with this property, namely, let $Y':=Y_1 \cup Y_2$, where
    \begin{equation}\label{eq:Y1_and_Y2}
         \begin{aligned}
        Y_1&:=\{\N{\blank}_{\alpha,\beta,\alpha+\beta,\delta}: \alpha + \beta > \delta\ge \alpha \ge \beta >0\},\\
        Y_2&:=\{\N{\blank}_{\alpha,\beta,\gamma,\gamma}: \alpha\ge \beta >0,\; \gamma\ge \alpha+\beta\}.
        \end{aligned}
    \end{equation}

    We will briefly show that every element of $Y$ can be mapped into $Y'$ by an automorphism of $F_2$. Note that we can always ensure that $\alpha\ge \beta$ and $\gamma\ge \delta$; just apply the automorphisms $a\mapsto b, b \mapsto a$ and $a\mapsto a, b \mapsto b^{-1}$ if necessary.
    
    Suppose that $\N{\blank}_{\alpha,\beta,\gamma,\delta}\in Y$ does not belong to the $\Aut{(F_2)}$-orbit of any element of $Y_2$. Then we can assume without loss of generality that $\gamma=\alpha + \beta > \delta > \alpha - \beta \ge 0$. If $\delta \ge \alpha$, then $\N{\blank}_{\alpha,\beta,\gamma,\delta}\in Y_1$. Otherwise, let $\sigma\in\Aut{(F_2)}$ be given by $a\mapsto ab^{-1}, b\mapsto b^{-1}$ and denote by $\alpha',\beta',\gamma',\delta'$ the values that $\N{\sigma(\blank)}_{\alpha,\beta,\gamma,\delta}$ takes at $a,b,ab,ab^{-1}$, respectively.
    We have $\alpha'=\delta$, $\beta'=\beta$, $\gamma'=\N{ab^{-2}}_{\alpha,\beta,\gamma,\delta} = \delta + \beta$, $\delta'=\alpha$. Notice that $\delta' \ge \max\{\alpha',\beta'\}$ and $\gamma'=\alpha'+\beta'=\delta + \beta>\alpha =\delta'$. Swapping $a$ with $b$ if necessary, we ensure that $\alpha'\ge \beta'$ (without changing $\gamma'$, $\delta'$). Thus, we get an element of $Y_1$.
    
    In addition, note that Algorithm 1 can be extended to include (as the final step, if necessary) the ``normalization'' procedure described above. The modified algorithm would then output a ping-pong pair $(g, h)$ such that $\alpha:=\N{g}$, $\beta:=\N{h}$, $\gamma:=\N{gh}$, and $\delta:=\N{gh^{-1}}$ satisfy the conditions appearing in the definition of one of the sets in \eqref{eq:Y1_and_Y2}. For convenience, we could also include $\alpha,\beta,\gamma,\delta$ as part of the output. After this extension, when given a pseudo-length $\N{\blank}_{\alpha,\beta,\gamma,\delta}\in Y'$ as input, the algorithm would clearly return $(a,b)$ along with the (unchanged) numbers $\alpha,\beta,\gamma,\delta$.
\end{rem}

One can see that the sets \eqref{eq:Y1_and_Y2}, after projectivization (setting $\gamma:=1$), correspond to two perpendicular triangles in the $\alpha,\beta,\delta$-space. Their closures intersect at the segment: $1/2 \le \alpha\le 1$, $\beta=1-\alpha$, $\delta=1$.

We have some tentative computational evidence suggesting that the set $Y'$ contains exactly one element of every $\Aut{(F_2)}$-orbit in $\PPsi{(F_2)}$. Let us therefore state the following conjecture.

\begin{conj}\label{conj:conjecture}
    Let $\Lambda=\mathbb{R}$ and $Y':=Y_1\cup Y_2$, where $Y_1,Y_2 \subseteq \PPsi{(F_2)}$ are given by \eqref{eq:Y1_and_Y2}. Then every $\Aut{(F_2)}$-orbit in  $\PPsi{(F_2)}$ contains exactly one element of $Y'$.
\end{conj}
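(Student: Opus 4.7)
The existence half of the conjecture is already in hand: Theorem~\ref{thm:description_of_orbits} places every $\Aut(F_2)$-orbit into $Y$, and Remark~\ref{rem:final_rem} gives explicit automorphisms of $F(a,b)$ pushing any element of $Y$ into $Y_1 \cup Y_2 = Y'$. So the conjecture reduces to uniqueness: if $\|\cdot\|_{\alpha,\beta,\gamma,\delta}$ and $\|\cdot\|_{\alpha',\beta',\gamma',\delta'}$ both lie in $Y'$ and in a common $\Aut(F_2)$-orbit, the two tuples must agree.

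My plan for uniqueness is to reconstruct $(\alpha,\beta,\gamma,\delta)$ from $p \in Y'$ using only $\Aut(F_2)$-invariant spectral data. First, $\beta_p := \inf\{\|w\|_p : w\neq 1\}$ is manifestly invariant; combining the trivial values $\|a^n\|_p = |n|\alpha$, $\|b^n\|_p = |n|\beta$ with formula~\eqref{eq:syllables} on cyclically reduced representatives of the remaining conjugacy classes shows $\|w\|_p \geq \beta$, with equality exactly on conjugates of $b^{\pm 1}$, so $\beta_p = \beta$. Second, the set of primitive elements of $F_2$ is $\Aut(F_2)$-invariant, so the minimum of $\|\cdot\|_p$ over primitive conjugacy classes that do \emph{not} realize $\beta_p$ is invariant, and by~\eqref{eq:syllables} it equals $\alpha$ (realized by $a^{\pm 1}$). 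Third, let $\mu_p := \min\{\|w\|_p : w \text{ is primitive, not conjugate to } a^{\pm 1} \text{ or } b^{\pm 1}\}$; a direct check with~\eqref{eq:syllables} gives $\mu_p = \delta$ for $p \in Y_1$ and $\mu_p = \gamma = \delta$ for $p \in Y_2$. The dichotomy $\mu_p < \alpha_p + \beta_p$ vs.\ $\mu_p \geq \alpha_p + \beta_p$ then separates the two regimes and recovers $(\gamma,\delta)$, with $\gamma = \alpha+\beta$ forced in the $Y_1$ case.

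The main obstacle is the rigorous verification of these spectral characterizations across all primitive conjugacy classes in $F_2$, especially in the boundary cases where the inequalities in~\eqref{eq:either_or} or~\eqref{eq:good_pair_alfa_beta} degenerate into equalities: $\alpha = \beta$, $\delta = \alpha$ inside $Y_1$, or $\gamma = \alpha+\beta$ inside $Y_2$. In such cases the sets of primitives realizing a given small value of $\|\cdot\|_p$ enlarge, so one needs finer invariants — natural candidates being multiplicity counts of primitive conjugacy classes at each small translation length, coupled with the classical continued-fraction parametrization of primitives in $F_2$. I expect this combinatorial bookkeeping, rather than any deep structural obstruction, to be the crux of the proof; the author's tentative computational evidence is consistent with the conjecture, and the difficulty appears to lie in organizing a complete case analysis of the boundary configurations.
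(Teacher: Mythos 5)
This statement is an open conjecture in the paper: the author offers only ``tentative computational evidence'' and a suggested route (show that Algorithm 1, modified as in Remark \ref{rem:final_rem}, returns the same tuple $\alpha,\beta,\gamma,\delta$ for $\N{\blank}$ and $\N{\sigma(\blank)}$ when $\sigma$ is an elementary Nielsen automorphism). So there is no proof in the paper to match yours against, and your proposal must be judged on its own terms. Your existence half is fine --- it is exactly Theorem \ref{thm:description_of_orbits} plus Remark \ref{rem:final_rem}. Your uniqueness strategy (recover the tuple from $\Aut(F_2)$-invariant spectral data, so that two elements of $Y'$ in one orbit have equal tuples and hence coincide) is a legitimately different and reasonable plan, but as written it is not a proof, and the gap is not merely ``bookkeeping.''

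Two concrete problems. First, your third quantity $\mu_p$ is defined as a minimum over primitives \emph{not conjugate to $a^{\pm1}$ or $b^{\pm1}$}; that condition names specific basis elements and is therefore not $\Aut(F_2)$-invariant, so $\mu_p$ cannot be used directly to compare two points of $Y'$ lying in the same orbit. To repair it you must define $\mu_p$ via the conjugacy classes realizing the first two minima --- and it is precisely in the degenerate configurations ($\alpha=\beta$, $\delta=\alpha$, $\delta=\beta$, $\gamma=\delta=\alpha+\beta$) that those realizer sets change: e.g.\ for $\alpha=\beta$ the class of $a$ already realizes $\beta_p$, so your ``second minimum'' returns $\delta$ or $\gamma$ rather than $\alpha$, and your stated characterization of $\alpha$ fails. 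Second, even off the boundary, the claims that the first, second and third minima over primitive classes are realized by $b$, $a$, $ab^{\pm1}$ require a verification over \emph{all} primitive conjugacy classes of $F_2$ via \eqref{eq:syllables} (say through the coprime-pair/continued-fraction parametrization); you assert this but do not carry it out, and you yourself flag the boundary analysis as ``the crux.'' Since the crux is exactly what is missing, the proposal should be regarded as a plausible programme --- arguably a more invariant-theoretic one than the paper's algorithmic suggestion --- rather than a proof of the conjecture.
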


To prove this conjecture, it would be sufficient to show that if ${\N{\blank} \in \PPsi{(F_2)}}$ and $\sigma$ is an elementary Nielsen automorphism of $F_2$, then Algorithm 1, modified as described in the final paragraph of Remark \ref{rem:final_rem}, returns the same numbers $\alpha,\beta,\gamma,\delta$ for $\N{\blank}$ and $\N{\sigma({\blank})}$.

\newpage

\bibliographystyle{siam}
\bibliography{references}

\end{document}